\providecommand{\U}[1]{\protect\rule{.1in}{.1in}}
\newtheorem{theorem}{Theorem}
\newtheorem{definition}[theorem]{Definition}
\newtheorem{example}[theorem]{Example}
\newtheorem{lemma}[theorem]{Lemma}
\newtheorem{proposition}[theorem]{Proposition}
\newenvironment{proof}[1][Proof]{\noindent\textbf{#1.} }{\ \rule{0.5em}{0.5em}}
\numberwithin{theorem}{section}
\numberwithin{equation}{section}
\begin{document}

\title{On Word Equations Originated from Discrete Dynamical Systems Related to
Antisymmetric Cubic Maps With Some Applications}
\author{Elias ABBOUD\\\textit{The Academic Arab Institute, Faculty of Education, }\\\textit{Beit Berl College, Doar Beit Berl 44905, Israel}\\\textit{Email: eabboud@beitberl.ac.il\ }}
\maketitle
\date{}

\renewcommand*\abstractname{Abstract\hfill} {\noindent\textbf{Abstract} In
this article, we solve some word equations originated from discrete dynamical
systems related to antisymmetric cubic map. These equations emerge when we
work with primitive and greatest words. In particular, we characterize all the
cases for which $\langle\beta_{1} \overline{\beta_{1}}\rangle=\langle\beta
_{2}\overline{\beta_{2}}\rangle$ where $\beta_{1}$ and $\beta_{2}$ are the
greatest words in $\langle\langle\beta_{1}\rangle\rangle$ and $\langle
\langle\beta_{2}\rangle\rangle$ of $\mathbf{M}(n)$.}%

\medskip
\medskip\noindent\textbf{Keywords} Word equation, broken alternating word,
primitive word, greatest word, parity-lexicographic order
\medskip
\medskip\newline\noindent\textbf{MR(2010) Subject Classification}
{\small {68R15, 37B10, 05A99} }

\section{Introduction}

Words in discrete dynamical systems have been widely studied. Sun and Helmberg
\cite{H-S} presented an algorithm for recognizing maximality of a word by
introducing an extended order of words connected with unimodal maps. Chen and
Wang \cite{C-W} studied the relation between the kneading sequences of
unimodal maps and the decomposition of necklaces. Dai et al. \cite{D-L-W1},
\cite{D-L-W2} gave some combinatorial properties of the periodic kneading
sequences of quadratic maps and antisymmetric cubic maps. Lu \cite{Lu}
introduced two extended orders on the kneading sequences of antisymmetric
cubic maps and discussed the enumeration of kneading words and the
decomposition of corresponding necklaces.

Historically, applied symbolic dynamics was first developed for unimodal maps,
which are realized by quadratic maps from the unit interval into itself. One
of the major topics which was studied by many authors is the MSS sequences,
see \cite{Abb}-\cite{D-L-W1}. On the contrary, combinatorics of symbolic
dynamics of antisymmetric cubic maps is less investigated (see \cite{D-L-W2}
and \cite{Lu}).

Word equations are very important when dealing with combinatorics of words.
Lothaire \cite[p.162]{Loth} devotes a full chapter on this topic, especially
he opens the chapter by presenting the full solution of the equation: $XY=YX.$
In this case both words $X$ and $Y$ must be powers of the same word.

In this article, we solve some word equations originated from discrete
dynamical systems related to antisymmetric cubic map. These equations are of
the form $ZW=\overline{W}\overline{Z},XY=\overline{Y}X$ and $XY=\overline{Y}Z$
where $X,Y,Z,W\in$ $W.$ In the first two cases we give explicit solutions
which implies in particular that some words are not primitive and in the
latter case we get that $XY$ is an alternating broken word. These equations
emerge when we work with primitive and greatest words. In particular, we
characterize all cases for which $\langle\beta_{1}\overline{\beta_{1}}%
\rangle=\langle\beta_{2}\overline{\beta_{2}}\rangle$ where $\beta_{1}$ and
$\beta_{2}$ are the greatest words in $\langle\langle\beta_{1}\rangle
\rangle\in M(n)$ and $\langle\langle\beta_{2}\rangle\rangle\in M(n),$
respectively, (see Theorems \ref{characterization} and
\ref{explicit characterization} below).

\section{Preliminaries}

We borrow the main notation and terminology from \cite{Lu}. Corresponding to
the symbolic dynamics of antisymmetric cubic maps, we shall be concerned with
the admissible sequences which are (1) all infinite sequences on $\{L,M,R\}$
(2) all finite sequences of the forms $\gamma\overline{C}$ and $\gamma
C\overline{\gamma C},$ where $\gamma$ is a word on $\{L,M,R\}$. Admissible
sequences are ordered by the parity-lexicographic order. The letters
$\{L,\overline{C},M,C,R\}$ are endowed with the order $L<\overline{C}<M<C<R$.
A finite sequence is said to be even (odd) if it contains an even (odd) number
of $M$'s. Let $w=w_{1}...w_{k}s...$ and $v=w_{1}...w_{k}t...$ be two
admissible sequences such that $s\neq t.$ The order relation of $w$ and $v$ is
defined as follows: when $w_{1}...w_{k}$ is even then $w>$ $v$ if $s>t$ and
$w<v$ if $s<t$; otherwise, when $w_{1}...w_{k}$ is odd then $w<$ $v$ if $s>t$
and $w>v$ if $s<t$.

Following \cite{Loth} for the terminology and notations for words, let $A$ be
a finite alphabet, whose elements are called letters. A word $\alpha$ on $A$
is a finite sequence of elements of $A$: $a_{1}a_{2}...a_{n}$ where $n$ is
called the\ length of $\alpha$, denoted by $\left\vert \alpha\right\vert $.
The set of all words on $A$ is denoted by $A^{\ast}$. The set $A^{\ast}$ is
equipped with a binary operation obtained by $(a_{1}a_{2}...a_{n})(b_{1}%
b_{2}...b_{m})=a_{1}a_{2}...a_{n}b_{1}b_{2}...b_{m}.$ The empty word is the
identity of this operation. Given a word $w\in A^{\ast}$ and $n\geq1$, by
$w^{n}$ we denote the word $ww...w$ ($n$ terms). A word $v\in$ $A^{\ast}$ is
said to be a left or a right factor of a word $x\in A^{\ast}$ if $x=vx_{1}$ or
$x=x_{2}v,$ respectively, where $x_{1},x_{2}\in A^{\ast}$. Denote by $v|x$
when $v$ is a left factor of $x$ and by $v\nmid x$ when $v$ is not a left
factor of $x.$

Define%

\[
\mathbf{W}_{n}=\{t_{1}t_{2}...t_{n}|t_{i}=L,M\text{ or }R\},
\]%
\[
\mathbf{W}=\underset{n\geq0}{\cup}\mathbf{W}_{n}.
\]
For a word $w=w_{1}...w_{k}\in W$, we define the complementation $\overline
{w}$ as $\overline{w}=\overline{w_{1}...w_{k}}$, where $\overline
{R}=L,\overline{L}=R$ and $\overline{M}=M$. A word $w\in W$ of length $n$ is
said to be primitive provided its smallest subperiod is also of length $n,$
i.e., $w\neq u^{l},l\geq2$. \ For $w\in W$, we denote by $\langle w\rangle$
the set of words which can be obtained from $w$ by cyclic permutations, and
call $\langle w\rangle$ a conjugate class or a necklace. Let $\langle\langle
w\rangle\rangle=\langle w\rangle\cup\langle\overline{w}\rangle.$ We call a
necklace $\langle w\rangle$ self-complementary if $\overline{w}\in\langle
w\rangle.$ It is clear that $\langle w\rangle$ is self-complementary if and
only if $\langle\langle w\rangle\rangle=\langle w\rangle$.

Define also the special sets of words:

{\small
\[
M(n)=\left\{  \langle\langle w\rangle\rangle=\langle w\rangle\cup
\langle\overline{w}\rangle|w\in\mathbf{W}_{n}\text{ is primitive}\right\}  ,
\]
}

{\small
\[
U(n)=\left\{  \langle w\rangle|w\in\mathbf{W}_{n}\text{ is primitive and
self-complementary}\right\}  .
\]
}

Lu \cite{Lu}, introduced an extended parity-lexicographic order on the words
in $W$. Since there are two critical points $C$ and $\overline{C}$, he
introduced two such orders, defined as follows: Let $u$ and $v$ be words in
$W$. If $u\nmid w$ and $w\nmid u$ then $u$ and $w$ are ordered according to
the parity-lexicographic order. However, if $u|w,$ say $w=uv$ then

a. The $\overline{C}$-order: $uv>u$ if $u$ is odd, otherwise $u>uv$.

b. The $C$-order: $uv>u$ if $u$ is even, otherwise $u>uv$.

If $D$ denotes $\overline{C}$ or $C$ then given $w\in W$, we call $w$ a
$D-$lexical word if $w$ is greater than all of its right shifts (right
factors) and $\overline{w}$ is less than all of its right shifts in $D-$order.

Let $L_{1}(D)=\{M,R\}$, and for $n>1$, let

{\small
\[
L_{n}(D)=\{w|w\in W_{n}\text{ and }w\text{ is }D-\text{lexical}\}.
\]
}

\begin{definition}
A word $w\in W$ is called a \underline{broken alternating word } if it is of
the form $w=(w_{1}\overline{w_{1}})^{n}w_{0}$ where $n$ is a positive integer
and $w_{0}$ is a left factor of $w_{1}\overline{w_{1}}.$
\end{definition}

We need the following elementary properties (see Lemmas 2.1.2.2 and 2.3 in [Lu]);

\begin{lemma}
\label{elementary properties}(a) Let $w=w_{1}w_{2}...w_{n}\in L_{n}(C),n>1.$
Then $w_{1}=R$ and $w_{n}=M$ or $R.$

(b) Let $\alpha,\beta,\gamma$ and $w$ be words in $W.$ If $\alpha
\beta<w<\alpha\gamma$ in $\overline{C}-$order or $C-$order, then $\alpha|w.$

(c) Suppose that $\alpha$ and $\beta$ are words in $W$ such that
$\alpha\dagger\beta$ and $\beta\dagger\alpha.$ Then in $\overline{C}-$order or
$C-$order we have;
\[
\alpha>\beta\Leftrightarrow\overline{\beta}>\overline{\alpha}%
\]

\end{lemma}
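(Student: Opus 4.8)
The plan is to treat the three parts separately, handling (b) and (c) by a single ``first point of difference'' analysis and reserving the case-heavy argument for (a). For (c) I would first record two facts about complementation on $\{L,M,R\}$: it reverses the order (from $\overline{L}=R$, $\overline{M}=M$, $\overline{R}=L$ we get $a>b\Leftrightarrow\overline{a}<\overline{b}$) and it preserves parity (it fixes $M$ and permutes $L,R$, so a word and its complement contain the same number of $M$'s). Since $\alpha$ is not a left factor of $\beta$ and $\beta$ is not a left factor of $\alpha$, the two are compared by the plain parity-lexicographic rule: writing $\alpha=pa\cdots$ and $\beta=pb\cdots$ with $p$ their longest common prefix and $a\neq b$, the complements $\overline{\alpha}=\overline{p}\,\overline{a}\cdots$ and $\overline{\beta}=\overline{p}\,\overline{b}\cdots$ still first differ at position $|p|+1$. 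Now $\alpha>\beta$ is decided by the parity of $p$ together with the comparison of $a$ and $b$, whereas $\overline{\beta}>\overline{\alpha}$ is decided by the parity of $\overline{p}$ (equal to that of $p$) together with the comparison of $\overline{b}$ and $\overline{a}$; since complementation reverses the order of the deciding letters, the two equivalences collapse to the same condition in both the even and the odd case, which is exactly $\alpha>\beta\Leftrightarrow\overline{\beta}>\overline{\alpha}$.

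For (b) I would argue by contraposition, assuming $\alpha$ is not a left factor of $w$ and showing that $w$ cannot lie strictly between $\alpha\beta$ and $\alpha\gamma$. The mechanism is that $\alpha\beta$ and $\alpha\gamma$ share the prefix $\alpha$, so $w$ is forced to compare against both of them in the same direction. Indeed, if $w$ is a proper left factor of $\alpha$ then $w$ is a left factor of both $\alpha\beta$ and $\alpha\gamma$, and the $D$-order prefix rule decides $w$ against each using only the parity of $w$, hence the same way for both. If instead $w$ and $\alpha$ diverge at some position $\leq\min(|w|,|\alpha|)$, then $w$ first differs from each of $\alpha\beta$ and $\alpha\gamma$ at that same position and with the same pair of letters, so the two comparisons again have the same sign. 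In either case $w$ lies on one side of both $\alpha\beta$ and $\alpha\gamma$, contradicting $\alpha\beta<w<\alpha\gamma$; therefore $\alpha$ is a left factor of $w$.

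For (a) I would unwind ``$C$-lexical'' into the two inequalities $\overline{w}<v<w$ in $C$-order, required for every proper right factor $v$ of $w$, and exploit that the empty prefix is even, so a disagreement in the first letter is resolved by the plain order $L<M<R$. Applying $w>w_{j}\cdots w_{n}$ for each $j\geq2$ forces $w_{1}\geq w_{j}$, so $w_{1}$ is the largest letter occurring in $w$; then comparing $\overline{w}$ with the length-one right factor $w_{n}$ through $\overline{w}<w_{n}$ gives, in the generic case $\overline{w_{1}}\neq w_{n}$, the inequality $\overline{w_{1}}<w_{1}$, which among $L,M,R$ holds only for $w_{1}=R$. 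Once $w_{1}=R$ is known we have $\overline{w_{1}}=L$, so if $w_{n}=L$ the length-one right factor $v=L$ is a left factor of $\overline{w}$ and the $C$-order prefix rule yields $\overline{w}>L$ (since $L$ is even), contradicting $\overline{w}<v$; hence $w_{n}\in\{M,R\}$.

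I expect the main obstacle to be the bookkeeping in (a) around the degenerate and ``prefix'' subcases, namely when a right factor is simultaneously a left factor of $w$ or of $\overline{w}$: the constant words $L^{n}$, $M^{n}$, $R^{n}$, and the case $\overline{w_{1}}=w_{n}$ that forces $w_{1}=M$. In each of these the plain parity-lexicographic comparison is replaced by the $C$-order prefix rule, and one must check that the parity of the repeated block makes the two required inequalities $\overline{w}<v$ and $v<w$ incompatible; for instance $w=M^{n}$ needs $w>M^{n-1}$ and $\overline{w}<M^{n-1}$ simultaneously, which the prefix rule permits only for $n$ odd and $n$ even respectively. These are precisely the places where the $C$-order rather than the $\overline{C}$-order is used, so isolating them is the delicate step; by contrast (b) and (c) reduce to the uniform first-difference computation as soon as complementation is seen to reverse the order and preserve parity.
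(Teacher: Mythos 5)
First, a point of reference: the paper itself contains \emph{no proof} of this lemma --- it is imported from Lu's paper (``see Lemmas 2.1, 2.2 and 2.3 in \cite{Lu}''), so there is no argument in the text to compare yours against; what you have produced is a self-contained derivation from the definitions of Section 2, which the paper never supplies. On that standard, your parts (b) and (c) are correct and complete. The two facts you isolate --- complementation preserves parity and reverses the order of the letters $L<M<R$ --- together with a first-point-of-difference analysis (legitimate in (c) precisely because $\alpha\dagger\beta$ and $\beta\dagger\alpha$ exclude the prefix rule) give (c) in both the even and odd cases. The contrapositive argument for (b) --- if $\alpha\nmid w$ then either $w$ is a proper left factor of $\alpha$, in which case the prefix rule compares $w$ to $\alpha\beta$ and to $\alpha\gamma$ using only the parity of $w$, or $w$ diverges from $\alpha$ at a common position, in which case both comparisons are settled by the same prefix and the same pair of letters --- is exactly the right mechanism, covers all cases, and works uniformly in the $C$- and $\overline{C}$-orders.

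Two remarks on (a). First, your unwinding of ``$C$-lexical'' as requiring $\overline{w}<v<w$ for every proper right factor $v$ \emph{of $w$} is not a cosmetic choice but the only workable reading of the paper's ambiguous definition: under the alternative reading, in which $\overline{w}$ is compared with the right factors of $\overline{w}$ itself, the word $RL$ would be $C$-lexical (since $RL>L$ and $LR<R$), contradicting the very statement $w_n\in\{M,R\}$; this should be said explicitly in the proof. Second, your explicit arguments cover the generic case $\overline{w_1}\neq w_n$ and the step ruling out $w_n=L$ once $w_1=R$ is known, but the degenerate cases are only gestured at, and your list of them is slightly off: $R^n$ needs no ruling out at all (it is $C$-lexical and satisfies the conclusion); $L^n$ fails not by a prefix-rule parity clash but by the plain first-letter comparison $\overline{w}=R^n>L^{n-1}$; and --- the one genuine omission --- the case $w_1=M$ contains non-constant words such as $MLM$ and $MLLM$, not just $M^n$, and for these your $M^n$ computation does not apply verbatim. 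That case does fall to the mechanism you describe, in one line: if $w_1=M$ then by your first step every letter of $w$ lies in $\{L,M\}$; if $w_n=L$ then $\overline{w}$ begins with $\overline{M}=M>L$, contradicting $\overline{w}<w_n$; if $w_n=M$ then $v=M$ is simultaneously a right factor and a left factor of $w$, and the $C$-order prefix rule (with $M$ odd) gives $M>w$, contradicting maximality. Inserting that paragraph closes the argument, and with it your proof of (a) --- and of the whole lemma --- is complete.
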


\section{Word equations on $W$}

The aim of this section is to solve the word equations $ZW=\overline
{W}\overline{Z},XY=\overline{Y}X$ and $XY=\overline{Y}Z$ where $X,Y,Z,W\in$
$W.$ We shall assume that all words are not empty.

\subsection{The word equation $ZW=\overline{W}\overline{Z}$}

\begin{proposition}
\label{ZW=WZ}Suppose $ZW=\overline{W}\overline{Z}$ where $\left\vert
ZW\right\vert =m$ and $\left\vert W\right\vert =r.$ Let $d=(m,r)$ be the
greatest common divisor. Then one of the following occurs:

(1) $\left\vert Z\right\vert =\left\vert W\right\vert $ implies $Z=\overline
{W}.$

(2) $\left\vert Z\right\vert \neq\left\vert W\right\vert $ implies (a)
$Z=(\overline{E}E)^{(\frac{m-r}{d}-1)/2}\overline{E}$ and $W=(E\overline
{E})^{(\frac{r}{d}-1)/2}E,$ if $\frac{m}{d}$ is even, where $\overline{E}$ is
a left factor of $Z$ of length $d.$ Hence, $Z$ and $W$ are alternating broken words.

Or (b) $ZW=M^{m}$ if $\frac{m}{d}$ is odd. \ \ 
\end{proposition}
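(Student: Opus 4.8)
The plan is to recast the two-variable equation as a single cyclic condition on $U:=ZW$ and then run a Fine--Wilf--type orbit analysis. First I would observe that $\overline{W}\,\overline{Z}=\overline{WZ}$, so the equation reads $U=\overline{WZ}$; since $WZ$ is exactly the cyclic conjugate of $U$ obtained by shifting $|Z|=m-r=:s$ letters, writing $U=u_1\cdots u_m$ gives the clean relation $u_j=\overline{u_{j+s}}$ with indices read modulo $m$, for every $j$. Because $r+s=m$, the same relation yields $u_j=\overline{u_{j+r}}$, so the roles of $r$ and $s$ are symmetric. Case (1) is then immediate: if $|Z|=|W|$ then $s=r$, $U$ has even length $2r$, and comparing the first $r$ letters of $ZW$ and $\overline{W}\,\overline{Z}$ gives $Z=\overline{W}$.

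Next I would iterate $u_j=\overline{u_{j+s}}$ along the orbit of the shift-by-$s$ map on $\mathbb{Z}/m\mathbb{Z}$. After $t$ steps one gets that $u_{j+ts}$ equals $u_j$ when $t$ is even and $\overline{u_j}$ when $t$ is odd. Since $\gcd(s,m)=\gcd(m-r,m)=\gcd(m,r)=d$, the cyclic subgroup $\langle s\rangle\subseteq\mathbb{Z}/m\mathbb{Z}$ consists of the multiples of $d$ and has order $m':=m/d$, so the orbit of each index $j$ is precisely its residue class modulo $d$ and the first return to $j$ occurs at $t=m'$. This cyclic reformulation dissolves the usual range restrictions of Fine--Wilf arguments and reduces everything to a single consistency condition, $u_j=\overline{u_j}$ or $u_j=u_j$ according to the parity of $m'$; pinning down its two branches is the heart of the argument.

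If $m'=m/d$ is odd, then taking $t=m'$ gives $u_j=\overline{u_j}$ for every $j$; as $M$ is the unique self-complementary letter this forces $u_j=M$ throughout, i.e. $ZW=M^m$, which is branch (2b). If $m'$ is even I would cut $U$ into $m'$ consecutive blocks $B_1\cdots B_{m'}$ of length $d$; since $s=s'd$ with $s'=(m-r)/d$, the relation becomes $B_{i+s'}=\overline{B_i}$ with block-indices modulo $m'$. Here $\gcd(s',m')=\gcd(s',r'+s')=\gcd(s',r')=1$ (writing $r'=r/d$), so $s'$ generates $\mathbb{Z}/m'\mathbb{Z}$ and every block is either $B_1$ or $\overline{B_1}$; moreover $m'$ even and $s'$ odd make the index parity a consistent $2$-colouring, whence $B_i=B_1$ for odd $i$ and $B_i=\overline{B_1}$ for even $i$.

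Finally, setting $E:=\overline{B_1}$ (so that $\overline{E}=B_1=u_1\cdots u_d$ is indeed the length-$d$ left factor of $Z$), I would read off the blocks. The first $s'$ blocks form $Z$ and, because $s'$ is odd, they alternate as $\overline{E}\,E\,\overline{E}\cdots\overline{E}=(\overline{E}E)^{(s'-1)/2}\overline{E}$; the remaining $r'$ blocks form $W$ and, starting at the even index $s'+1$, alternate as $E\,\overline{E}\cdots E=(E\overline{E})^{(r'-1)/2}E$, giving branch (2a). Both expressions match the shape $(w_1\overline{w_1})^n w_0$ with $w_0$ a left factor of $w_1\overline{w_1}$, so $Z$ and $W$ are broken alternating words in the sense of the Definition above. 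The main obstacle I anticipate is purely the combinatorial bookkeeping of the orbit: confirming that shift-by-$s$ yields a single consistency relation per residue class (the group-theoretic content $\langle s\rangle=d\mathbb{Z}/m\mathbb{Z}$) and carefully tracking the parity of the accumulated complementations, since it is exactly this parity count, governed by $m/d$, that separates the degenerate $M^m$ solutions from the genuine alternating ones.
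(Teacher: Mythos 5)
Your proof is correct, and its skeleton coincides with the paper's: both recast the equation as a single cyclic relation on the letters of $ZW$ (you write $u_j=\overline{u_{j+s}}$ with $s=m-r$; the paper writes $\overline{y_i}=y_{i+t}$ with $t=m-r$ and $y_j$ the complement of the $j$-th letter), both observe that $\gcd(s,m)=d$ governs the orbit structure of the shift, and both split on the parity of $m/d$, with the odd case handled by the identical argument that an odd number of complementations around a closed orbit forces $u_j=\overline{u_j}$, hence $u_j=M$. Where you genuinely diverge is the even case: the paper sets up an action of $\mathbb{Z}_m$ on the $2m$ symbols $\{y_i,\overline{y_i}\}$, computes stabilizers, and invokes Burnside's lemma to count $d$ orbits before "arranging" them into rows and reading off the alternating pattern, whereas you partition $ZW$ into $m'=m/d$ blocks of length $d$, note that the relation descends to blocks as $B_{i+s'}=\overline{B_i}$ with $\gcd(s',m')=1$, and use parity of block indices ($s'$ odd, $m'$ even, so reduction modulo $m'$ preserves parity) to obtain the two-colouring $B_i=B_1$ for odd $i$ and $B_i=\overline{B_1}$ for even $i$. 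Your route is more elementary and arguably tighter: the Burnside count only establishes the number and length of the orbits, and the paper's subsequent arrangement of orbits is really your block-parity observation in disguise, which you make explicit while avoiding the group-theoretic machinery altogether. The one step you assert without derivation is that $s'$ is odd, but this is immediate (as the paper also notes) from $\gcd(s',m')=1$ and $m'$ even; with that, both arguments land on the identical formulas $Z=(\overline{E}E)^{(s'-1)/2}\overline{E}$ and $W=(E\overline{E})^{(r'-1)/2}E$ with $\overline{E}=u_1\cdots u_d$ the length-$d$ left factor of $Z$.
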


\begin{proof}
(1) Obvious.

(2) Suppose $\left\vert ZW\right\vert =m$ and $\left\vert W\right\vert =r,$
then $r<m.$ Let $Z=$ $\overline{y_{1}y_{2}...y_{m-r}}$ and $W=\overline
{y_{m-r+1}...y_{m}}.$

It follows that
\begin{equation}
ZW=\overline{y_{1}}\overline{y_{2}}...\overline{y_{m}}=y_{m-r+1}...y_{m}%
y_{1}y_{2}...y_{m-r}. \label{-1}%
\end{equation}
As a result, we get the relation;%
\[
\overline{y_{i}}=y_{j},1\leq i\leq m,
\]
where
\[
m-r+i\equiv j(\operatorname{mod}m),1\leq j\leq m.
\]

Denote $t=m-r$ then obviously $d=(m,r)|t$.

\underline{Subcase 1}: $\frac{m}{d}$ is odd.

Clearly we have;
\[
\overline{y_{i}}=y_{i+t}=\overline{y_{i+2t}}=...=y_{i+(2k+1)t},
\]
where all indices are taken $\operatorname{mod}m.$ Now we ask whether there
exists a $k$ for which
\[
i+(2k+1)t\equiv i(\operatorname{mod}m).
\]
This congruence is equivalent to the following:%
\[
(2k+1)t\equiv0(\operatorname{mod}m).
\]
Thus it is sufficient to take $2k+1=\frac{m}{d},$ so that
\[
(2k+1)t\equiv\frac{m}{d}t\equiv m\frac{t}{d}\equiv0(\operatorname{mod}m).
\]
This computation can be carried out for each $i$ and therefore $\overline
{y_{i}}=y_{i}$ for every $i,1\leq i\leq m.$ Hence, $y_{i}=M,$ and $ZW=M^{m}$.

\underline{Subcase 2}: $\frac{m}{d}$ is even.

In this case the following is satisfied \ for every $1\leq i\leq m$;%

\begin{equation}
\overline{y_{i}}=y_{i+t}=\overline{y_{i+2t}}=...=\overline{y_{i+\frac{m}{d}t}%
}.\label{-3}%
\end{equation}
Clearly, the group $Z_{m}$ acts on the set $S=\{\overline{y_{1}}%
,\overline{y_{2}},...,\overline{y_{m}},y_{1},y_{2},...,y_{m}\}$ in the
following manner: for $a\in Z_{m}$ define $\overline{y_{i}}^{a}=y_{i+at}$ and
$y_{i}^{a}=\overline{y_{i+at}}.$ Then $\overline{y_{i}}^{a}=$ $\overline
{y_{i}}^{b}\Leftrightarrow y_{t+i+at}=y_{t+i+bt}$ $\Leftrightarrow
t+i+at\equiv t+i+bt(\operatorname{mod}m).$ Hence, $at\equiv
bt(\operatorname{mod}m)$ and since $t=m-r$ we have $ar\equiv
br(\operatorname{mod}m)$ which is equivalent to $a\frac{r}{d}\equiv b\frac
{r}{d}(\operatorname{mod}\frac{m}{d}).$ But $(\frac{r}{d},\frac{m}{d})=1$ so
that $a\equiv b(\operatorname{mod}\frac{m}{d}).$ This proves that the length
of each orbit is exactly $\frac{m}{d}$ and each orbit which begins with
$\overline{y_{i}}$ is given in \ref{-3}, namely; $(\overline{y_{i}}%
,y_{i+t},\overline{y_{i+2t}},...,y_{i+(\frac{m}{d}-1)t}).$ On the other hand,
if $b=0$ then $\overline{y_{i}}^{a}=$ $\overline{y_{i}}^{b}=y_{t+i}%
=\overline{y_{i}}$ and this true iff $a\equiv0(\operatorname{mod}\frac{m}%
{d}).$ Consequently, fixed points occur only under the action of the subset
$\{0,\frac{m}{d},2\frac{m}{d},...,(d-1)\frac{m}{d}\}\subseteq Z_{m}$ and each
has exactly $m$ fixed points. By the Burnside lemma the number of orbits,
which begins with $\overline{y_{i}},$ of the action of $Z_{m}$ on $S$ is:%
\begin{align*}
\#\text{orbits} &  =\frac{1}{\left\vert \mathbb{Z}_{m}\right\vert
}\underset{a\in\mathbb{Z}_{m}}{%
{\displaystyle\sum}
}\left\vert Fix(a)\right\vert \\
&  =\frac{1}{m}[\left\vert Fix(0)\right\vert +\left\vert Fix(\frac{m}%
{d})\right\vert +...+\left\vert Fix((d-1)\frac{m}{d})\right\vert ]\\
&  =\frac{1}{m}dm=d.
\end{align*}
As a result we can arrange the distinct orbits which begin with $\overline
{y_{i}}$ as follows:%
\[%
\begin{array}
[c]{ccccc}%
\overline{y_{1}}= & y_{1+t}= & \overline{y_{1+2t}}= & ... & =y_{1+(\frac{m}%
{d}-1)t}\\
\overline{y_{2}}= & y_{2+t}= & \overline{y_{2+2t}}= & ... & =y_{2+(\frac{m}%
{d}-1)t}\\
\vdots &  &  &  & \\
\overline{y_{d}}= & y_{d+t}= & \overline{y_{d+2t}}= & ... & =y_{d+(\frac{m}%
{d}-1)t}%
\end{array}
.
\]
Letting $E=$ $y_{1}y_{2}...y_{d}$ then $\overline{ZW}=y_{1}y_{2}...y_{m}$ can
be partitioned into $\frac{m}{d}$ (even) alternating orbits of the form
$(E\overline{E})^{\frac{m}{2d}}$ and $Z=$ $\overline{y_{1}y_{2}...y_{m-r}}$
can be partitioned into $\frac{m-r}{d}$ (note that $(\frac{r}{d},\frac{m}%
{d})=1$ and $\frac{m}{d}$ is even hence $\frac{r}{d}$ and $\frac{m-r}{d}$ are
odds) alternating orbits of the form $(E\overline{E})^{(\frac{m-r}{d}-1)/2}E.$
Therefore;%
\[
\overline{ZW}=y_{1}y_{2}...y_{m}=(E\overline{E})^{\frac{m}{2d}}=(E\overline
{E})^{n_{1}}%
\]

and
\[
Z=\overline{y_{1}}\overline{y_{2}}...\overline{y_{m-r}}=(\overline
{E}E)^{(\frac{m-r}{d}-1)/2}\overline{E}=(\overline{E}E)^{n_{2}}\overline{E}%
\]
where $n_{2}=(\frac{m-r}{d}-1)/2.$

Hence,
\[
W=(E\overline{E})^{(\frac{r}{d}-1)/2}E,
\]
and the result follows.
\end{proof}

\begin{example}
($m=6,r=4$)

In this case $d=(m,r)=2$ and $\frac{m}{d}=3$ is odd. Equation (\ref{-1})
becomes
\[
\overline{y_{1}}\overline{y_{2}}\overline{y_{3}}\overline{y_{4}}%
\overline{y_{5}}\overline{y_{6}}=y_{3}y_{4}y_{5}y_{6}y_{1}y_{2}.
\]
Therefore, $\overline{y_{1}}=y_{3}=\overline{y_{5}}=y_{1}.$ Likewise,
$\overline{y_{2}}=y_{4}=\overline{y_{6}}=y_{2}.$ Continuing this manner one
can easily see that $y_{i}=M$ for every $1\leq i\leq6,$ which is consistent
with the conclusion of the previous proposition.
\end{example}

\begin{example}
($m=8,r=2$)

In this case $d=(m,r)=2$ and $\frac{m}{d}=4$ is even. Equation (\ref{-1}) is
equivalent to:
\[
\overline{y_{1}}\overline{y_{2}}\overline{y_{3}}\overline{y_{4}}%
\overline{y_{5}}\overline{y_{6}}\overline{y_{7}}\overline{y_{8}}=y_{7}%
y_{8}y_{1}y_{2}y_{3}y_{4}y_{5}y_{6}.
\]
Hence,%
\begin{align*}
\overline{y_{1}} &  =y_{7}=\overline{y_{5}}=y_{3}=\overline{y_{1}},\\
\overline{y_{2}} &  =y_{8}=\overline{y_{6}}=y_{4}=\overline{y_{2}}.
\end{align*}
Letting $E=$ $y_{1}y_{2}$ then $\overline{ZW}=(y_{1}y_{2})(y_{3}y_{4}%
)(y_{5}y_{6})(y_{7}y_{8})=(E\overline{E})^{2},$ and
\begin{align*}
Z &  =\overline{y_{1}}\overline{y_{2}}\overline{y_{3}}\overline{y_{4}%
}\overline{y_{5}}\overline{y_{6}}=\overline{E}E\overline{E}\\
W &  =\overline{y_{7}}\overline{y_{8}}=E,
\end{align*}
which is consistent with the conclusion of the previous proposition since
$\frac{m}{2d}=2$ and $\frac{r}{d}=1.$
\end{example}

\subsection{The word equation $XY=\overline{Y}X$}

\begin{proposition}
\label{Eq XY=Y-X}Suppose $XY=\overline{Y}X$ then the words $XY\overline
{XY},\overline{Y}XY\overline{X}$ are not primitive.
\end{proposition}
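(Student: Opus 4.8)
The plan is to show first that the two displayed words are in fact the \emph{same} word, and then that this single word is a proper power. Writing $XY\overline{XY}=X\,Y\,\overline{X}\,\overline{Y}$ and using the hypothesis $XY=\overline{Y}X$ together with its complement $\overline{X}\,\overline{Y}=Y\overline{X}$, successive substitutions give
\[
XY\overline{XY}=\overline{Y}X\overline{X}\,\overline{Y}=\overline{Y}XY\overline{X}=\overline{Y}^{2}X\overline{X}.
\]
Thus both $XY\overline{XY}$ and $\overline{Y}XY\overline{X}$ equal $t:=\overline{Y}^{2}X\overline{X}$, and it suffices to exhibit a proper period of $t$ that divides its length $2(|X|+|Y|)$.

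Write $|X|=p$, $|Y|=q$, $n=p+q$, and set $s:=XY$. The key structural observation is that the hypothesis makes $X$ a border of $s$: the prefix of $s$ of length $p$ is $X$, while from $s=\overline{Y}X$ the suffix of length $p$ is also $X$. Hence $s$ has period $q$. Let $B$ be the prefix of $s$ of length $q$; since $s=\overline{Y}X$ we have $B=\overline{Y}$, while the length-$q$ suffix of $s$ equals $Y$ and, by periodicity, equals the cyclic rotation of $B$ by $r:=p\bmod q$ positions. Therefore $\operatorname{rot}_{r}(B)=\overline{B}$.

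I would then feed this into Proposition \ref{ZW=WZ}. Splitting $B=B_{1}B_{2}$ with $|B_{1}|=r$ turns $\operatorname{rot}_{r}(B)=\overline{B}$ into $B_{2}B_{1}=\overline{B_{1}}\,\overline{B_{2}}$, which is precisely the equation $ZW=\overline{W}\,\overline{Z}$ with $Z=B_{2}$ and $W=B_{1}$, and here the relevant gcd is $\gcd(q,r)=\gcd(p,q)=:d_{0}$. Proposition \ref{ZW=WZ} then forces $B$ to be either $M^{q}$ (the odd/degenerate case, which also covers $r=0$, i.e. $q\mid p$) or an even alternating word $(E\overline{E})^{q/(2d_{0})}$ with $|E|=d_{0}$. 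In the first case $s=M^{n}$, so $t=M^{2n}$ is not primitive. In the second case $\overline{Y}=B$ and $X$ are left factors of $(E\overline{E})^{\infty}$, so $X$ and $Y$ are broken alternating words built from the single block $E$.

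It remains to reassemble $t=\overline{Y}^{2}X\overline{X}$ from these pieces and confirm it is a genuine power; this parity bookkeeping is the step I expect to be the main obstacle. The alternating branch of Proposition \ref{ZW=WZ} applies exactly when $q/d_{0}$ is even, and since $\gcd(p/d_{0},q/d_{0})=1$ this forces $p/d_{0}$ to be odd. With $p/d_{0}$ odd one checks $X\overline{X}=(E\overline{E})^{p/d_{0}}$, and $\overline{Y}^{2}=(E\overline{E})^{q/d_{0}}$, so that $t=(E\overline{E})^{\,n/d_{0}}$ with $n/d_{0}\ge 3$. In every case $t$ is a proper power, hence $XY\overline{XY}$ is not primitive, and since $\overline{Y}XY\overline{X}=t$ by the first paragraph, it is not primitive either.
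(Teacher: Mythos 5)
Your proof is correct, and it is organized genuinely differently from the paper's. The paper splits into three cases according to whether $\left\vert X\right\vert =\left\vert Y\right\vert $, $\left\vert X\right\vert >\left\vert Y\right\vert $ or $\left\vert X\right\vert <\left\vert Y\right\vert $, reaches the equation $ZW=\overline{W}\,\overline{Z}$ of Proposition \ref{ZW=WZ} by a different decomposition in each case, and then computes the two words $XY\overline{XY}$ and $\overline{Y}XY\overline{X}$ separately in each branch. You do two things the paper does not. First, the identity $XY\overline{XY}=\overline{Y}XY\overline{X}=\overline{Y}^{2}X\overline{X}$, obtained from the hypothesis together with its complement $\overline{X}\,\overline{Y}=Y\overline{X}$, collapses the two target words into a single word $t$, so only one non-primitivity claim needs to be verified (the paper instead carries out the computation twice per case). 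Second, the border-to-period step ($X$ is both a prefix and a suffix of $s=XY$, hence $s$ has period $q=\left\vert Y\right\vert $, hence $Y$ is the $r$-rotation of $\overline{Y}$ with $r=p\bmod q$) replaces the three-way length comparison by a single uniform reduction to Proposition \ref{ZW=WZ}; your degenerate case $r=0$ absorbs both the paper's case $\left\vert X\right\vert =\left\vert Y\right\vert $ and the case $q\mid p$, and there $B=\overline{B}$ forces $B=M^{q}$ directly, correctly avoiding an application of the proposition to an empty word $W$. I checked the parity bookkeeping you were worried about: $\gcd(p/d_{0},q/d_{0})=1$ with $q/d_{0}$ even does force $p/d_{0}$ odd, whence $X\overline{X}=(E\overline{E})^{p/d_{0}}$, $\overline{Y}^{2}=(E\overline{E})^{q/d_{0}}$, and $t=(E\overline{E})^{n/d_{0}}$ with $n/d_{0}\geq3$, so the conclusion stands. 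Both proofs share the same engine, Proposition \ref{ZW=WZ}; what yours buys is brevity and uniformity, while the paper's case analysis additionally exhibits the explicit solutions $X=(\overline{E}E)^{n_{2}}\overline{E}$, $Y=(E\overline{E})^{n_{1}}$ (or $X=Y=M^{l}$) of the equation $XY=\overline{Y}X$ itself, which fits the article's stated aim of solving the word equations rather than only extracting non-primitivity.
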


\begin{proof}
Consider the following cases:

(1) If $\left\vert X\right\vert =\left\vert Y\right\vert =l$ then
$X=\overline{Y}$ and $Y=X$ so that $X=\overline{X}$ and $Y=\overline{Y}.$
Equivalently, $X=Y=M^{l}.$ Hence, all words on $X$ and $Y$ which contain at
least two letters are non-primitive.

(2) $\left\vert X\right\vert =l>\left\vert Y\right\vert =m.$

Let $l=qm+r,$ $0\leq r<m,$ and denote $X=x_{1}x_{2}...x_{l}$ and $Y=y_{1}%
y_{2}...y_{m}$ where $x_{i},y_{j}\in\{L,M,R\},1\leq i\leq l,1\leq j\leq m.$
Then the word equation $XY=\overline{Y}X$ is equivalent to:
\[
x_{1}x_{2}...x_{l}y_{1}y_{2}...y_{m}=\overline{y_{1}}\overline{y_{2}%
}...\overline{y_{m}}x_{1}x_{2}...x_{l}.
\]

Consequently, comparing the letters of $X$ in both sides and taking in account
the periodicity we have;%
\[
X=(\overline{y_{1}}\overline{y_{2}}...\overline{y_{m}})^{q}\overline{y_{1}%
}\overline{y_{2}}...\overline{y_{r}}=y_{m-r+1}...y_{m}(y_{1}y_{2}...y_{m}%
)^{q}.
\]
Since $l>m$ then $\ q\geq1$\ $.$ It follows that
\[
\overline{y_{1}}\overline{y_{2}}...\overline{y_{m}}=y_{m-r+1}...y_{m}%
y_{1}y_{2}...y_{m-r}.
\]
Let $Z=$ $\overline{y_{1}y_{2}...y_{m-r}}$ and $W=\overline{y_{m-r+1}...y_{m}%
}$ then the equation which was discussed in the previous proposition is
satisfied: $ZW=\overline{W}\overline{Z},$ where $\left\vert ZW\right\vert =m$
and $\left\vert W\right\vert =r.$ Let $d=(m,r)$ then we have two cases:

If $\frac{m}{d}$ is odd then $Y=\overline{ZW}=M^{m}$ and $X=M^{l}.$ In
particular, the word $XY\overline{XY}$ is not primitive.

If $\frac{m}{d}$ is even then $Y=\overline{ZW}=(E\overline{E})^{\frac{m}{2d}%
}=(E\overline{E})^{n_{1}},n_{1}\geq1.$

and
\[
X=(\overline{y_{1}}\overline{y_{2}}...\overline{y_{m}})^{q}\overline{y_{1}%
}\overline{y_{2}}...\overline{y_{r}}=(\overline{E}E)^{\frac{m}{2d}q}%
(\overline{E}E)^{(\frac{r}{d}-1)/2}\overline{E}=(\overline{E}E)^{n_{2}%
}\overline{E}%
\]
where $n_{2}=\frac{m}{2d}q+(\frac{r}{d}-1)/2\geq1.$

In particular, the words%
\begin{align*}
XY\overline{XY} &  =(\overline{E}E)^{2n_{1}+2n_{2}+1},\\
\overline{Y}XY\overline{X} &  =(\overline{E}E)^{2n_{1}+2n_{2}+1},
\end{align*}
are not primitive.

(3) $\left\vert X\right\vert =l<\left\vert Y\right\vert =m.$

The equation $XY=\overline{Y}X$ implies that $X$ is a left factor of
$\overline{Y}$ and $X$ is a right factor of $Y$. Hence, $\overline{Y}=XY_{1}$
and $Y=Y_{2}X.$ Substituting back in the equation we get $Y_{1}=Y_{2}$ and
therefore $\overline{Y}=XY_{1}$ and $Y=Y_{1}X.$ Equivalently, $\overline
{XY_{1}}=Y_{1}X.$ Apply Proposition \ref{ZW=WZ} by taking $Z=Y_{1},W=X$ then
$Y=Y_{1}X=ZW,\left\vert Y\right\vert =\left\vert ZW\right\vert =m$ and
$\left\vert X\right\vert =\left\vert W\right\vert =l.$ Let $d=(m,l).$ If
$\frac{m}{d}$ is odd then $Y=ZW=M^{m}$ and $X=W=M^{l}$ and therefore
$XY\overline{XY}$ and $\overline{Y}XY\overline{X}$ are not primitive. If
$\frac{m}{d}$ is even then $Z=(\overline{E}E)^{(\frac{m-l}{d}-1)/2}%
\overline{E}$ and $X=W=(E\overline{E})^{(\frac{l}{d}-1)/2}E$ so that
$Y=ZW=(\overline{E}E)^{n_{1}},n_{1}\geq1,XY=(E\overline{E})^{n_{2}}E,n_{2}%
\geq1,$ and $XY\overline{XY}=(E\overline{E})^{n_{2}}E(\overline{E}E)^{n_{2}%
}\overline{E}=(E\overline{E})^{2n_{2}+1}$ is not primitive.

The word $\overline{Y}XY\overline{X}$ is also not primitive. Since,
$\overline{Y}X=(E\overline{E})^{n_{1}}(E\overline{E})^{(\frac{l}{d}%
-1)/2}E=(E\overline{E})^{n_{3}}E,n_{3}\geq1$ and $\overline{Y}XY\overline
{X}=(E\overline{E})^{n_{3}}E(\overline{E}E)^{n_{3}}\overline{E}=(E\overline
{E})^{2n_{3}+1}.$
\end{proof}

\subsection{The word equation $XY=\overline{Y}Z$}

\begin{proposition}
\label{XY=Y-Z}Suppose $XY=\overline{Y}Z$ then;

(1) $\left\vert X\right\vert =\left\vert Y\right\vert =\left\vert Z\right\vert
$ implies $X=\overline{Z}$ and the equation becomes $XY=\overline{Y}%
\overline{X}.$

(2) If $\left\vert X\right\vert =\left\vert Z\right\vert <\left\vert
Y\right\vert $ then denote $\left\vert Y\right\vert =q\left\vert X\right\vert
+r.$ We have two cases: (a) for even $q,$ it follows that $\overline{X}%
=Y_{0}X_{0}$\thinspace$,$ $Z=\overline{X_{0}}Y_{0}$ and $Y=(Y_{0}%
X_{0}\overline{Y_{0}X_{0}})^{\frac{q}{2}}Y_{0}$ is a broken alternating word.
(b) for odd $q,$ it follows that $X=Y_{0}X_{0},Z=\overline{X_{0}}Y_{0}$ and
$Y=(\overline{Y_{0}X_{0}}Y_{0}X_{0})^{\frac{q-1}{2}}\overline{Y_{0}X_{0}}%
Y_{0}$ is a broken alternating word.

(3) $\left\vert X\right\vert =\left\vert Z\right\vert >\left\vert Y\right\vert
$ implies $X=\overline{Y}X_{1}$ and $Z=X_{1}Y.$
\end{proposition}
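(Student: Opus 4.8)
The plan is to begin with the length identity forced by the equation: comparing lengths in $XY=\overline{Y}Z$ gives $|X|+|Y|=|Y|+|Z|$, hence $|X|=|Z|$ unconditionally. Thus the three items are genuinely exhaustive, indexed by whether $|X|$ equals, exceeds, or is exceeded by $|Y|$. Parts (1) and (3) I would dispatch quickly by matching prefixes and suffixes. For (1), equal lengths force the length-$|X|$ prefix of each side to agree and likewise the suffixes, giving $X=\overline{Y}$ and $Y=Z$; then $Y=\overline{X}$ and $Z=Y=\overline{X}$, i.e. $X=\overline{Z}$, and substituting $\overline{Y}=X$, $Z=\overline{X}$ turns the equation into $XY=\overline{Y}\,\overline{X}$, which is exactly the equation of Proposition \ref{ZW=WZ}. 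For (3), since $|\overline{Y}|=|Y|<|X|$, the word $\overline{Y}$ is a left factor of $X$, say $X=\overline{Y}X_{1}$; substituting and cancelling the common prefix $\overline{Y}$ from $XY=\overline{Y}Z$ leaves $X_{1}Y=Z$, as claimed.

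The substance is part (2), where $n:=|X|=|Z|<|Y|=:m$. Writing $m=qn+r$ with $0\le r<n$ and setting $X=x_{1}\cdots x_{n}$, $Y=y_{1}\cdots y_{m}$, $Z=z_{1}\cdots z_{n}$, I would compare the two sides of $XY=\overline{Y}Z$ position by position and extract three facts. Positions $1,\dots,n$ give $x_{i}=\overline{y_{i}}$, so the length-$n$ prefix of $Y$ is $\overline{X}$. Positions $m+1,\dots,m+n$ give $z_{k}=y_{m-n+k}$, so $Z$ is the length-$n$ suffix of $Y$. The middle positions $n+1,\dots,m$ yield the complement-shift recurrence $y_{j+n}=\overline{y_{j}}$ for $1\le j\le m-n$. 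Cutting $Y$ into blocks $Y=B_{1}B_{2}\cdots B_{q}B_{q+1}$ with $|B_{k}|=n$ for $k\le q$ and $|B_{q+1}|=r$, the recurrence says $B_{k+1}=\overline{B_{k}}$ for $1\le k\le q-1$, while at the last full block it only forces $B_{q+1}=\overline{P}$, where $P$ is the length-$r$ prefix of $B_{q}$. Combined with $B_{1}=\overline{X}$ this gives the alternation $B_{k}=\overline{X}$ for odd $k$ and $B_{k}=X$ for even $k$.

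Finally I would read off the stated forms. Splitting the last full block as $B_{q}=PQ$ with $|P|=r$, $|Q|=n-r$, and setting $Y_{0}=\overline{P}$, $X_{0}=\overline{Q}$, the suffix description gives $Z=Q\,\overline{P}=\overline{X_{0}}Y_{0}$ in both parities. When $q$ is even, $B_{q}=X$ so $\overline{X}=\overline{P}\,\overline{Q}=Y_{0}X_{0}$, and grouping the blocks in consecutive pairs $B_{2i-1}B_{2i}=\overline{X}\,X=Y_{0}X_{0}\overline{Y_{0}X_{0}}$ with the trailing $B_{q+1}=\overline{P}=Y_{0}$ assembles $Y=(Y_{0}X_{0}\overline{Y_{0}X_{0}})^{q/2}Y_{0}$; when $q$ is odd, $B_{q}=\overline{X}$ gives $X=\overline{P}\,\overline{Q}=Y_{0}X_{0}$, and the analogous grouping, now leaving a single $B_{q}=\overline{Y_{0}X_{0}}$ before the tail $Y_{0}$, assembles $Y=(\overline{Y_{0}X_{0}}\,Y_{0}X_{0})^{(q-1)/2}\overline{Y_{0}X_{0}}Y_{0}$. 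In each case the tail is a left factor of one period, so that $Y$ is a broken alternating word in the sense of the definition above. The main obstacle is purely bookkeeping: keeping the index ranges of the recurrence correct at the boundary between $B_{q}$ and the short block $B_{q+1}$, and tracking how the parity of $q$ decides whether the decomposed block is $X$ or $\overline{X}$; once these are pinned down the two displayed forms and the $Z$-formula fall out, with the degenerate case $r=0$ (empty $Y_{0}$) handled by the same computation.
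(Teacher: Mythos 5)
Your proposal is correct and follows essentially the same route as the paper: partition $Y$ into $q$ blocks of length $\left\vert X\right\vert$ plus a remainder, derive the alternation $B_{k+1}=\overline{B_{k}}$ together with the boundary relation at the short block, and read off the stated forms of $X$, $Z$ and $Y$ by splitting the last full block (the paper does the same block comparison directly, where you recover it from a letter-by-letter index argument). Parts (1) and (3) are also handled exactly as in the paper, via prefix/suffix matching and cancellation.
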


\begin{proof}
There are three cases, namely;

(1) $\left\vert X\right\vert =\left\vert Y\right\vert =\left\vert Z\right\vert
.$ In this case $X=\overline{Y}$ and $Y=Z$ hence, $X=\overline{Z}$ and the
equation becomes $XY=\overline{Y}\overline{X}$. The conclusion of Proposition
\ref{ZW=WZ} applies.

(2) $\left\vert X\right\vert =\left\vert Z\right\vert <\left\vert Y\right\vert
.$ Suppose $\left\vert Y\right\vert =q\left\vert X\right\vert +r$ and
partition $Y$ into subwords as follows: $Y=Y_{1}Y_{2}...Y_{q}Y_{0}$ where,
$\left\vert Y_{i}\right\vert =$ $\left\vert X\right\vert ,1\leq i\leq q$ and
$\left\vert Y_{0}\right\vert =r.$ Hence, the equation $XY=\overline{Y}Z$ is
equivalent to the equation:%
\[
XY_{1}Y_{2}...Y_{q}Y_{0}=\overline{Y_{1}Y_{2}...Y_{q}Y_{0}}Z.
\]
Consequently, $X=\overline{Y_{1}},Y_{1}=\overline{Y_{2}},...,Y_{q-1}%
=\overline{Y_{q}}$ and $Y_{q}Y_{0}=\overline{Y_{0}}Z.$ Moreover, $Y_{q}=X$ or
$Y_{q}=\overline{X}$ according to whether $q$ is even or odd, respectively.
Thus we have:%
\[
Y=\left\{
\begin{array}
[c]{c}%
(\overline{X}X)^{\frac{q}{2}}Y_{0},\,\ \ \ q\text{ is even and }Y_{0}\text{ is
a left factor of }\overline{Y_{q}}=\overline{X}\\
(\overline{X}X)^{\frac{q-1}{2}}\overline{X}Y_{0},\,\ \ \ q\text{ is odd and
}Y_{0}\text{ is a left factor of }\overline{Y_{q}}=X
\end{array}
\right.  .
\]
Consider two subcases;

\underline{Subcase 1}: If $q$ is even, write $\overline{X}=Y_{0}X_{0}$ then
$XY_{0}=\overline{Y_{0}X_{0}}Y_{0}=\overline{Y_{0}}Z$ implies $Z=\overline
{X_{0}}Y_{0}$ and $Y=(Y_{0}X_{0}\overline{Y_{0}X_{0}})^{\frac{q}{2}}Y_{0}$

\underline{Subcase 2}: If $q$ is odd, write $X=Y_{0}X_{0}$ then $\overline
{X}Y_{0}=\overline{Y_{0}X_{0}}Y_{0}=\overline{Y_{0}}Z$ implies $Z=\overline
{X_{0}}Y_{0}$ and $Y=(\overline{Y_{0}X_{0}}Y_{0}X_{0})^{\frac{q-1}{2}%
}\overline{Y_{0}X_{0}}Y_{0}.$

(3) $\left\vert X\right\vert =\left\vert Z\right\vert >\left\vert Y\right\vert
.$ The best possible we can get from the equation $XY=\overline{Y}Z$ \ is the
following: $X=\overline{Y}X_{1}$ and $Z=Z_{1}Y.$ It is easily seen that
$X_{1}=Z_{1}$ and therefore $X=\overline{Y}X_{1}$ and $Z=X_{1}Y.$
\end{proof}

\section{Applications}

Firstly, we prove that $\beta\overline{\beta}$ is a primitive word provided
$\beta$ is primitive and is the greatest word in $\langle\langle\beta
\rangle\rangle.$

\begin{lemma}
If $\beta\in L_{n}(C)$ or $\beta\in L_{n}(\overline{C})$ is primitive and is
the greatest word in $\langle\langle\beta\rangle\rangle$, then $\beta
\overline{\beta}$ is primitive.
\end{lemma}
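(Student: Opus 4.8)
The plan is to argue by contradiction. Suppose $\beta\overline{\beta}$ is not primitive, and write $\beta\overline{\beta}=u^{l}$ with $u$ primitive, $l\geq 2$, and period $p=\left\vert u\right\vert =2n/l\leq n$. I would first split on the parity of $l$. If $l$ is even then $p\mid n$, and the length-$n$ prefix of $u^{l}$, which is exactly $\beta$, equals $u^{l/2}$; since $\beta$ is primitive this forces $l/2=1$, so $\beta\overline{\beta}=u^{2}$ with $u=\beta$ and therefore $\overline{\beta}=\beta$. Then $w_{i}=\overline{w_{i}}$ for every letter, i.e. $\beta=M^{n}$, which is not primitive once $n\geq 2$ — a contradiction (only the trivial word $\beta=M$ escapes, and it must be excluded).

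For the substantive case $l$ odd, $l\geq 3$, note that $l\mid n$; putting $m=n/l$ the period is $p=2m$, and I split $u=ab$ into halves $a,b$ with $\left\vert a\right\vert =\left\vert b\right\vert =m$. Reading off the first $l$ and the last $l$ blocks of $u^{l}$ gives $\beta=(ab)^{(l-1)/2}a$ and $\overline{\beta}=b(ab)^{(l-1)/2}$. Now I would use two descriptions of the same word $\overline{\beta}\beta$: complementing $\beta\overline{\beta}=u^{l}$ gives $\overline{\beta}\beta=\overline{u}^{l}=(\overline{a}\,\overline{b})^{l}$, while $\overline{\beta}\beta$ is the cyclic rotation of $u^{l}$ by $n$ positions, and since $n=lm\equiv m\ (\mathrm{mod}\ 2m)$ this rotation equals $(ba)^{l}$. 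Hence $(ba)^{l}=(\overline{a}\,\overline{b})^{l}$; comparing equal powers with equal-length bases yields $ba=\overline{a}\,\overline{b}$. This is the equation of Proposition \ref{ZW=WZ} with $Z=b$, $W=a$ and $\left\vert Z\right\vert =\left\vert W\right\vert $, so part (1) gives $b=\overline{a}$. Writing $v=a$ I conclude $u=v\overline{v}$ and thus the broken alternating form $\beta=(v\overline{v})^{k}v$ with $k=(l-1)/2\geq 1$.

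The final step, and the cleanest one, is to contradict $\beta\in L_{n}(C)\cup L_{n}(\overline{C})$ directly from this shape. Because $k\geq 1$, the initial block $v$ of $\beta=v\cdot(\overline{v}v)^{k}$ is also a proper right factor of $\beta$ (its last block); symmetrically $\overline{v}$ is the initial block and a proper right factor of $\overline{\beta}=\overline{v}\cdot(v\overline{v})^{k}$. I would apply the left-factor clause of the $D$-order: being $D$-lexical requires $\beta$ to exceed its right shift $v$ and requires $\overline{\beta}$ to be smaller than its right shift $\overline{v}$. Since $\overline{v}$ contains the same number of $M$'s as $v$, these two demands impose opposite parities on $v$ — even versus odd in the $C$-order, odd versus even in the $\overline{C}$-order — and so cannot both hold. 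Therefore $\beta$ is neither $C$-lexical nor $\overline{C}$-lexical, contradicting the hypothesis and completing the argument.

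I expect the principal obstacle to lie in the odd-$l$ reduction: keeping the block decomposition of $u^{l}$ straight, checking that rotation by $n\equiv m\ (\mathrm{mod}\ 2m)$ really produces $(ba)^{l}$, and matching the resulting identity to Proposition \ref{ZW=WZ}. After the broken alternating form $\beta=(v\overline{v})^{k}v$ is secured the conclusion is short, but it rests entirely on the observation that the two clauses in the definition of a $D$-lexical word — the one governing $\beta$ and the one governing $\overline{\beta}$ — pull the parity of $v$ in opposite directions.
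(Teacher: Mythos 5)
Your proof is correct, and it departs from the paper's argument in two substantive ways. First, where the paper simply asserts at the outset that non-primitivity of $\beta\overline{\beta}$ means $\beta\overline{\beta}=(\gamma\overline{\gamma})^{k}$, $k>1$ — i.e., that the primitive root automatically has the symmetric form $\gamma\overline{\gamma}$ — you actually derive this: you handle even $l$ directly, and for odd $l$ you compare the complement $(\overline{a}\,\overline{b})^{l}$ of $u^{l}$ with its rotation $(ba)^{l}$ by $n$ positions and invoke Proposition \ref{ZW=WZ}(1) to get $b=\overline{a}$. That closes a real gap in the paper's first line. Second, your endgame is genuinely different: once $\beta=(v\overline{v})^{k}v$ is secured, the paper splits into three subcases according to whether $\gamma$ begins with $L$, begins with $R$ and is even, or begins with $R$ and is odd, and it needs the ``greatest word'' hypothesis in the first two of these; you instead run a single uniform parity argument using only lexicality — the requirement $\beta>v$ and the requirement $\overline{\beta}<\overline{v}$ (both instances of the left-factor clause, since $v$ is simultaneously a prefix and a proper suffix of $\beta$) force opposite parities on $v$, impossible because $v$ and $\overline{v}$ contain the same number of $M$'s. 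This buys uniformity and shows the greatest-word hypothesis is not actually needed beyond lexicality. Finally, your exclusion of $\beta=M$ is a genuine catch rather than a defect: $M\in L_{1}(C)\cap L_{1}(\overline{C})$ is primitive and is the greatest word in $\langle\langle M\rangle\rangle$, yet $M\overline{M}=M^{2}$ is not primitive, so the lemma tacitly requires $n\geq2$; the paper's proof passes over this case precisely because $M^{2}$ admits no root of the form $\gamma\overline{\gamma}$, which is the unproved assumption mentioned above.
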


\begin{proof}
Suppose that $\beta\overline{\beta}$ is not primitive. Then $\beta
\overline{\beta}=(\gamma\overline{\gamma})^{k},k>1.$ We have two cases;

Case 1. $k=2m,m\geq1.$

In this case, $\beta\overline{\beta}=(\gamma\overline{\gamma})^{2m}$ implies
that $\beta=(\gamma\overline{\gamma})^{m}=\overline{\beta}$. This implies that
$\beta=M^{l},l=2\left\vert \gamma\right\vert m\geq2,$ and since $\beta$ is
primitive we get a contradiction.

Case 2. $k=2m+1,m\geq1.$

In this case, $\beta\overline{\beta}=(\gamma\overline{\gamma})^{2m+1}$ implies
that $\beta=(\gamma\overline{\gamma})^{m}\gamma,m\geq1.$ Consider the
following subcases:

Subcase I. $\gamma=L\gamma_{1}$. Let $\beta^{\prime}=\overline{\gamma}%
\gamma(\gamma\overline{\gamma})^{m-1}\gamma\in\langle\beta\rangle$ then
$\beta^{\prime}>\beta.$ This is true since $\beta$ begins with $L$ and
$\beta^{\prime}$ begins with $R.$ Thus we have a contradiction.

Subcase II. $\gamma=R\gamma_{1}$ and $\gamma$ is even. Let $\beta
^{\prime\prime}=\gamma(\gamma\overline{\gamma})^{m}\in\langle\beta\rangle$
then $\beta^{\prime\prime}>\beta$. This is true since $\gamma L$ is a left
factor of $\beta,$ while $\gamma R$ is a left factor of $\beta^{\prime\prime
}.$ Thus we have a contradiction.

Subcase III. $\gamma=R\gamma_{1}$ and $\gamma$ is odd.

Now, suppose that $\beta\in L_{n}(C).$ By the definition of the $C-$order and
since $\gamma$ is odd we have:%
\[
\beta=(\gamma\overline{\gamma})^{m}\gamma<\gamma.
\]
But this contradicts that $\beta$ is $C-$lexical, which asserts that $\beta$
is greater than all of its right shifts. If $\beta\in L_{n}(\overline{C}),$
then
\[
\overline{\beta}=(\overline{\gamma}\gamma)^{m}\overline{\gamma}.
\]
Since $\overline{\gamma}$ is odd, then in the $\overline{C}-$order we have
$\overline{\beta}>\overline{\gamma}.$ But this contradicts the lexicality of
$\beta\in L_{n}(\overline{C}),$ which asserts that $\beta$ is greater than all
of its right shifts and $\overline{\beta}$ is less than all of its right shifts.

The result follows.
\end{proof}

\begin{lemma}
\label{primitive and greatest implies primitive}Let $\beta\in W_{n}.$ If
$\beta$ is primitive and is the greatest word in $\langle\langle\beta
\rangle\rangle$ then $\beta\overline{\beta}$ is primitive.
\end{lemma}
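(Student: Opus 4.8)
The plan is to imitate the proof of the preceding lemma but to extract every contradiction from the single hypothesis that $\beta$ is the greatest element of $\langle\langle\beta\rangle\rangle$ in the parity-lexicographic order, rather than from $C$- or $\overline{C}$-lexicality. So I would again argue by contradiction, assuming $\beta\overline{\beta}$ is not primitive. As in the preceding proof, the fact that the second half of $\beta\overline{\beta}$ is the complement of the first half forces the period to have complementary shape, i.e. $\beta\overline{\beta}=(\gamma\overline{\gamma})^{k}$ with $k\ge 2$. If $k=2m$ is even, then both halves of $(\gamma\overline{\gamma})^{2m}$ equal $(\gamma\overline{\gamma})^{m}$, so $\beta=\overline{\beta}$, whence $\beta=M^{n}$, contradicting primitivity. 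Hence $k=2m+1$ and $\beta=(\gamma\overline{\gamma})^{m}\gamma$ with $m\ge 1$, exactly the configuration analysed before.

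Next I would exploit maximality to pin down the first letter. Since $\beta$ is primitive and $\beta\neq M^{n}$, it contains a letter other than $M$; then either $\beta$ or $\overline{\beta}$ has a conjugate beginning with $R$, and because the empty prefix is even such a conjugate exceeds any word beginning with $M$ or $L$. As $\beta$ is the greatest word, $\beta$ itself — and therefore its initial block $\gamma$ — must begin with $R$. This already discards the ``$\gamma$ begins with $L$'' situation (Subcase I of the preceding proof), so only the parity of $\gamma$ remains. If $\gamma$ is even, then $\beta$ and its rotation $\gamma(\gamma\overline{\gamma})^{m}$ share the even prefix $\gamma$ and then continue with $L$ and $R$ respectively, so the rotation exceeds $\beta$, contradicting maximality. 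This is precisely how Subcase II was closed before, and it transfers verbatim because it uses only the greatest-word property.

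The remaining case, $\gamma$ beginning with $R$ and \emph{odd}, is where I expect the genuine difficulty, and it is exactly the step the preceding lemma settled by lexicality rather than by maximality. There $\gamma$ is a proper right factor of $\beta$, and since $\gamma$ is odd the $C$-order (respectively $\overline{C}$-order) rule of Lemma \ref{elementary properties} gives $\gamma>\beta$ (respectively the mirror statement for $\overline{\beta}$), violating $D$-lexicality. Under the present weaker hypothesis that tool is unavailable, and the obvious competitors do not help: the rotation $\gamma(\gamma\overline{\gamma})^{m}$ now shares the \emph{odd} prefix $\gamma$ with $\beta$, so the parity flip makes $\beta$ the larger word, while the rotation $(\gamma\overline{\gamma})^{m}\overline{\gamma}$ of $\overline{\beta}$ shares the \emph{even} prefix $(\gamma\overline{\gamma})^{m}$ with $\beta$ and hence also loses to $\beta$. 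My intended move would be to hunt inside the complementary necklace $\langle\overline{\beta}\rangle$ for a rotation exceeding $\beta$, but I do not expect the greatest-word hypothesis alone to furnish one: already for $\gamma=RM$ (odd, beginning with $R$) and $m=1$ one checks that $\beta=RMLMRM$ really is the greatest element of $\langle\langle\beta\rangle\rangle$, yet $\beta\overline{\beta}=(RMLM)^{3}$ is not primitive. Thus the crux of the argument is exactly this odd subcase, and I would expect closing it to require either re-importing the lexical hypothesis of the preceding lemma or some extra structural input beyond maximality; that is the step I anticipate as the main obstacle.
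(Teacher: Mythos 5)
Your proposal, as you yourself acknowledge, leaves Subcase III ($\gamma$ odd, beginning with $R$) open, so formally it is an incomplete proof. The paper closes that subcase with exactly the kind of ``extra structural input'' you anticipated: it first notes that if $\beta\in L_{n}(C)$ or $\beta\in L_{n}(\overline{C})$ the preceding lemma applies, and otherwise it invokes Theorem 2.8 of [Lu], cited as saying that a primitive word which is greatest in $\langle\langle\beta\rangle\rangle$ but lies in neither $L_{n}(C)$ nor $L_{n}(\overline{C})$ must have the form $\beta=\delta\overline{\delta}$ with $\delta$ odd. Combining $\beta=\delta\overline{\delta}$ with $\beta=(\gamma\overline{\gamma})^{m}\gamma$ and splitting at the midpoint, the paper derives a contradiction in both parities of $m$: for $m=2t$ it forces $\gamma=\lambda_{1}\overline{\lambda_{1}}$, contradicting the oddness of $\gamma$; for $m=2t+1$ it forces $\lambda_{1}=\lambda_{2}=M\cdots M$, contradicting primitivity of $\beta$.

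However, the most valuable part of your proposal is the example, and you should take it more seriously than you do: it is not merely an obstruction to your strategy, it is a counterexample to the lemma itself. Take $\beta=RMLMRM$. It is primitive (it equals neither $(RM)^{3}$ nor $(RML)^{2}$). It is the greatest word in $\langle\langle\beta\rangle\rangle$: the only members of the class beginning with $R$ are $RMLMRM$, $RMRMLM$ and $RMLMLM$, and $RMLMRM$ exceeds $RMRMLM$ (shared prefix $RM$ is odd, so $L<R$ reverses) and exceeds $RMLMLM$ (shared prefix $RMLM$ is even, and $R>L$). Yet $\beta\overline{\beta}=(RMLM)^{3}$ is not primitive. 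The same word also refutes Theorem 2.8 of [Lu] \emph{as the paper cites it}: $\beta$ lies in neither $L_{6}(C)$ (in $C$-order $RM>\beta$ because $RM$ is an odd prefix of $\beta$) nor $L_{6}(\overline{C})$ (in $\overline{C}$-order $\overline{\beta}=LMRMLM>LM$), but $\beta$ is not of the form $\delta\overline{\delta}$, since $\overline{RML}=LMR\neq MRM$. So the paper's proof does not fill your gap either; it rests on a cited statement that your example contradicts. The likely source of the discrepancy is that the class $\langle\langle RMLMRM\rangle\rangle$ \emph{does} contain a lexical word, namely $RMLMLM\in L_{6}(\overline{C})$, but that word is not the greatest element of the class; a theorem asserting that every class in $M(n)$ has a lexical or $\delta\overline{\delta}$ representative does not yield the version used here, that the \emph{greatest} word is lexical or $\delta\overline{\delta}$. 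In short: your case analysis of what maximality alone gives is correct, the remaining case genuinely cannot be closed, and rather than hunting for a missing argument you should report the counterexample.
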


\begin{proof}
If $\beta\in L_{n}(C)$ or $\beta\in L_{n}(\overline{C})$ then by the previous
lemma the result follows. Suppose $\beta\notin L_{n}(C)$ and $\beta\notin
L_{n}(\overline{C}).$ Hence, by Theorem 2.8 [Lu], $\beta=\delta\overline
{\delta}$ for odd $\delta.$ On the other hand, by the discussion in the
previous lemma there is a positive integer $m$ such that $\beta\overline
{\beta}=(\gamma\overline{\gamma})^{2m+1},$ for which $\gamma$ is odd and this
implies that
\[
\beta=(\gamma\overline{\gamma})^{m}\gamma,m\geq1.
\]
Consider the following cases:

Case I. $m=2t,t\geq1.$

In this case,
\[
\beta=(\gamma\overline{\gamma})^{t}\gamma(\overline{\gamma}\gamma)^{t}%
=\delta\overline{\delta}.
\]
This equation yields that $\left\vert \gamma\right\vert $ is even$.$ Thus we
may write $\gamma=\lambda_{1}\lambda_{2},$ $\left\vert \lambda_{1}\right\vert
=\left\vert \lambda_{2}\right\vert .$ Hence, $\delta=(\gamma\overline{\gamma
})^{t}\lambda_{1}$ and $\overline{\delta}=\lambda_{2}(\overline{\gamma}%
\gamma)^{t}.$ Therefore, $\lambda_{1}=\overline{\lambda_{2}}$ and hence
$\gamma=\lambda_{1}\overline{\lambda_{1}}.$ But this contradicts the fact that
$\gamma$ is an odd word.

Case II. $m=2t+1,t\geq0.$

In this case,
\[
\beta=(\gamma\overline{\gamma})^{t}\gamma\overline{\gamma}\gamma
(\overline{\gamma}\gamma)^{t}=\delta\overline{\delta}.
\]
Letting $\gamma=\lambda_{1}\lambda_{2},$ $\left\vert \lambda_{1}\right\vert
=\left\vert \lambda_{2}\right\vert ,$ we get $\delta=(\gamma\overline{\gamma
})^{t}\lambda_{1}\lambda_{2}\overline{\lambda_{1}}$ and $\overline{\delta
}=\overline{\lambda_{2}}\lambda_{1}\lambda_{2}(\overline{\gamma}\gamma)^{t}$
which implies that $\lambda_{1}=\overline{\lambda_{2}}$ and $\lambda
_{1}=\lambda_{2}$. But then $\lambda_{1}=\lambda_{2}=M$ and $\beta$ is not
primitive, a contradiction.
\end{proof}

The next theorem characterizes all cases for which $\langle\beta_{1}%
\overline{\beta_{1}}\rangle=\langle\beta_{2}\overline{\beta_{2}}\rangle$ where
$\beta_{1}$ and $\beta_{2}$ are the greatest words in $\langle\langle\beta
_{1}\rangle\rangle\in M(n)$ and $\langle\langle\beta_{2}\rangle\rangle\in
M(n).$

\begin{theorem}
\label{characterization}If $\langle\beta_{1}\overline{\beta_{1}}%
\rangle=\langle\beta_{2}\overline{\beta_{2}}\rangle$ where $\beta_{1}$ and
$\beta_{2}$ are the greatest words in $\langle\langle\beta_{1}\rangle
\rangle\in M(n)$ and $\langle\langle\beta_{2}\rangle\rangle\in M(n),$
respectively, then one of the following occurs:

\qquad(1) $\langle\langle\beta_{1}\rangle\rangle=\langle\langle\beta
_{2}\rangle\rangle.$

\qquad(2) $\beta_{2}=\lambda\mu\in L_{n}(\overline{C}),\beta_{1}=\overline
{\mu}\lambda\in L_{n}(\overline{C})$ and $\lambda\mu=\overline{\mu}\eta.$

\qquad(3) $\beta_{1}=\overline{\mu}\lambda_{1}\mu\overline{\lambda_{1}}%
,\beta_{2}=\lambda_{1}\mu\overline{\lambda_{1}}\mu\in L_{n}(\overline{C})$ and
$\lambda_{1}\mu=\overline{\mu}\lambda_{1}.$

\qquad(4) $\beta_{2}=\lambda\mu_{1}\lambda\overline{\mu_{1}}\in L_{n}%
(\overline{C}),\beta_{1}=\overline{\mu_{1}}\overline{\lambda}\mu_{1}\lambda$
and $\lambda\mu_{1}=\overline{\mu_{1}}\overline{\lambda}.$

\qquad(5) $\beta_{1}=\overline{\mu_{1}}\overline{\lambda}\mu_{1}\lambda
,\beta_{2}=\lambda\mu_{1}\lambda\overline{\mu_{1}}\in L_{n}(\overline{C})$ and
$\overline{\mu_{1}}\overline{\lambda}=\lambda\eta.$

\qquad(6) $\beta_{1}=\overline{\mu}\lambda_{1}\mu\lambda_{1}\in L_{n}%
(\overline{C}),\beta_{2}=\overline{\lambda_{1}}\overline{\mu}\lambda_{1}\mu$
and $\overline{\mu}\lambda_{1}=\overline{\lambda_{1}}\overline{\mu}.$

\ \ \ \ \ \ (7) $\beta_{1}=\overline{\mu}_{1}\lambda\mu_{1}\lambda\in
L_{n}(\overline{C}),\beta_{2}=\lambda\mu_{1}\overline{\lambda}\overline{\mu
}_{1}$ and $\lambda\mu_{1}=\overline{\mu}_{1}\lambda.$
\end{theorem}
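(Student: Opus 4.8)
The plan is to turn the necklace equality into a single cyclic rotation and then let the extremality of $\beta_1$ and $\beta_2$ cut that rotation down to the seven listed patterns, using the three word equations of Section 3 as the engine. First I would fix lengths and primitivity. Since $\langle\langle\beta_1\rangle\rangle,\langle\langle\beta_2\rangle\rangle\in M(n)$, each $\beta_i$ is a primitive word of length $n$, so Lemma \ref{primitive and greatest implies primitive} makes $\beta_1\overline{\beta_1}$ and $\beta_2\overline{\beta_2}$ primitive of length $2n$. A primitive word of length $2n$ has $2n$ pairwise distinct cyclic shifts, so $\langle\beta_1\overline{\beta_1}\rangle=\langle\beta_2\overline{\beta_2}\rangle$ singles out a unique $k$ with $0\le k<2n$ for which $\beta_2\overline{\beta_2}$ is the $k$-th shift of $\beta_1\overline{\beta_1}$. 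The one structural fact I would record is that the shift by $n$ sends $\beta_1\overline{\beta_1}$ to $\overline{\beta_1}\beta_1=\overline{\beta_1\overline{\beta_1}}$; hence shifts differing by $n$ are complementary, every shift again has its second half equal to the complement of its first half, and the analysis separates cleanly into $0<k<n$ and $n<k<2n$.

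The trivial shifts $k=0$ and $k=n$ give $\beta_2=\beta_1$ and $\beta_2=\overline{\beta_1}$, whence $\langle\langle\beta_1\rangle\rangle=\langle\langle\beta_2\rangle\rangle$, which is alternative (1). For any other $k$ I would read the cut point off as a two-block factorisation: for $0<k<n$ one gets $\beta_1=\lambda\mu$ with $|\lambda|=k$ and $\beta_2=\mu\overline{\lambda}$, and for $n<k<2n$ the complementary arrangement $\beta_2=\overline{\mu}\lambda$; after relabelling these are exactly the block skeletons visible in (2)--(7). So far only the rotation has been used, and every such $\beta_2$ sits in the right necklace without yet being extremal.

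The substance of the proof is to impose that $\beta_1$ and $\beta_2$ are the greatest words of their classes. By the classification of greatest primitive words (Theorem 2.8 of \cite{Lu}), each $\beta_i$ is either $\overline{C}$-lexical, i.e.\ lies in $L_n(\overline{C})$, or has the self-complementary shape $\delta\overline{\delta}$; this is precisely the dichotomy recorded by the annotations $\in L_n(\overline{C})$ in the statement, and I would organise the case split according to which of $\beta_1,\beta_2$ is lexical and which is of the form $\delta\overline{\delta}$. When both are lexical the two block descriptions force only a plain prefix relation, namely an instance of $XY=\overline{Y}Z$, giving alternative (2). When one of the two words carries the $\delta\overline{\delta}$ shape, superimposing that factorisation on the rotation skeleton forces an overlap equation among the inner blocks; depending on the comparison of the block lengths and the parities deciding which endpoint of the $\overline{C}$-lexical inequalities (Lemma \ref{elementary properties}(b),(c)) is active, this overlap is either $XY=\overline{Y}X$, or $ZW=\overline{W}\,\overline{Z}$, or again a prefix relation $XY=\overline{Y}Z$. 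Solving these by Propositions \ref{Eq XY=Y-X}, \ref{ZW=WZ} and \ref{XY=Y-Z} and substituting back refines the skeletons into the four-block forms, yielding the equations of (3), (6), (7) (from $XY=\overline{Y}X$), of (4) (from $ZW=\overline{W}\,\overline{Z}$), and of (5) (from the prefix equation), with the auxiliary word $\eta$ absorbing the non-overlapping tail in the prefix cases.

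The hardest part will be the bookkeeping in this last step. One must track simultaneously the shift range, the ordering of the two block lengths, and the even/odd parities that select the active lexical comparison, and then verify that each admissible combination lands on exactly one of the seven alternatives and that none is spurious. In particular I would have to check exhaustiveness and consistency: that the solutions produced by the three propositions never contradict the primitivity of $\beta_1\overline{\beta_1}$ guaranteed by Lemma \ref{primitive and greatest implies primitive}, and that the $\delta\overline{\delta}$ subcases for $\beta_1$ and for $\beta_2$ really do exhaust, respectively, (3)--(5) and (6)--(7). The algebraic substitution of the equation solutions back into the block skeletons is otherwise routine.
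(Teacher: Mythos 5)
Your overall strategy coincides with the paper's: reduce the necklace equality to a single cyclic rotation, cut the word into two blocks so that (up to the symmetric arrangement) $\beta_{1}=\overline{\mu}\lambda$ and $\beta_{2}=\lambda\mu$, invoke Theorem 2.8 of \cite{Lu} to say each greatest primitive word is either $\overline{C}$-lexical or of the form $\delta\overline{\delta}$ with $\delta$ odd, and then use the greatest-word property together with Lemma \ref{elementary properties}(b) (the squeeze $\alpha\beta<w<\alpha\gamma\Rightarrow\alpha|w$) to extract the prefix and overlap equations. Your matching of equation types to cases, namely (3), (6), (7) of type $XY=\overline{Y}X$, (4) of type $ZW=\overline{W}\,\overline{Z}$, and (2), (5) of prefix type $XY=\overline{Y}Z$, is exactly the paper's.

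However, there is a genuine gap: your case split is ``both lexical'', ``$\beta_{1}=\delta\overline{\delta}$ and $\beta_{2}$ lexical'', ``$\beta_{2}=\delta\overline{\delta}$ and $\beta_{1}$ lexical'', and you never address the fourth possibility, that \emph{both} $\beta_{1}$ and $\beta_{2}$ fail to be $\overline{C}$-lexical, i.e.\ $\beta_{1}=\overline{\mu}\lambda=\delta\overline{\delta}$ and $\beta_{2}=\lambda\mu=\rho\overline{\rho}$ with $\delta,\rho$ odd. This case cannot be absorbed into the others: every one of the alternatives (2)--(7) asserts that at least one of the two words lies in $L_{n}(\overline{C})$, so exhaustiveness of the list requires proving that the both-non-lexical case is impossible, and that is not automatic. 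The paper devotes its Case III to exactly this, comparing the two decompositions in the three length regimes $\left\vert\lambda\right\vert=\left\vert\delta\right\vert$, $\left\vert\lambda\right\vert<\left\vert\delta\right\vert$, $\left\vert\lambda\right\vert>\left\vert\delta\right\vert$ and deriving in each that $\lambda$ or $\mu$ is a power of $M$, which contradicts primitivity or the fact that a greatest primitive word must begin with $R$. Without that argument your proof establishes the theorem only under the unproved extra hypothesis that at least one $\beta_{i}$ is lexical. A second, lesser point: you propose to solve the derived equations by Propositions \ref{ZW=WZ}, \ref{Eq XY=Y-X}, \ref{XY=Y-Z} and substitute back, but for this theorem that is neither needed nor what produces the four-block shapes; the shapes such as $\overline{\mu}\lambda_{1}\mu\overline{\lambda_{1}}$ come from superimposing $\delta\overline{\delta}$ on the two-block skeleton by pure length comparison (the case $\left\vert\mu\right\vert=\left\vert\delta\right\vert$ being excluded by primitivity), and the word equations themselves \emph{are} the conclusions of (2)--(7). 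Solving them is the content of Theorem \ref{explicit characterization}, where cases (3), (6), (7) in fact turn out to be impossible; doing so here is harmless but conflates the two results.
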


\begin{proof}
Suppose
\[
\langle\beta_{1}\overline{\beta_{1}}\rangle=\langle\beta_{2}\overline
{\beta_{2}}\rangle,
\]
then $\beta_{1}\overline{\beta_{1}}\in\langle\beta_{2}\overline{\beta_{2}%
}\rangle.$ Denote $\beta_{1}=u_{1}u_{2}...u_{n}$ and $\beta_{2}=v_{1}%
v_{2}...v_{n}.$ Now, If $\beta_{1}\neq\beta_{2}$ and $\beta_{1}\neq
\overline{\beta_{2}}$ then;
\[
\beta_{1}\overline{\beta_{1}}=u_{1}u_{2}...u_{n}\overline{u_{1}}%
\overline{u_{2}}...\overline{u_{n}}=\overline{v_{k+1}}...\overline{v_{n}}%
v_{1}v_{2}...v_{n}\overline{v_{1}}...\overline{v_{k}},
\]
or%
\[
\beta_{1}\overline{\beta_{1}}=u_{1}u_{2}...u_{n}\overline{u_{1}}%
\overline{u_{2}}...\overline{u_{n}}=v_{k+1}v_{k+2}...v_{n}\overline{v_{1}%
}\overline{v_{2}}...\overline{v_{n}}v_{1}v_{2}...v_{k},
\]

where $0<k<n.$

Let $\lambda=$ $v_{1}v_{2}...v_{k}$ and $\mu=v_{k+1}v_{k+2}...v_{n}$ then
$\beta_{1}=\overline{\mu}\lambda$ and $\beta_{2}=\lambda\mu$ or $\beta_{1}%
=\mu\overline{\lambda}$ and $\beta_{2}=\lambda\mu.$

Since both cases are similar we shall consider only the first case, namely;
$\lambda=$ $v_{1}v_{2}...v_{k}$ and $\mu=v_{k+1}v_{k+2}...v_{n},0<k<n,$ and
$\beta_{1}=\overline{\mu}\lambda$ and $\beta_{2}=\lambda\mu.$ If $\beta
_{1}|\beta_{2}$ or $\beta_{2}|$ $\beta_{1}$ then $\beta_{1}=\beta_{2},$
because both words have the same length. Hence, case (1) of the theorem is
satisfied. Assume therefore that $\beta_{1}\dagger\beta_{2}$ and $\beta
_{2}\dagger$ $\beta_{1}.$

We have the following subcases;

I. $\beta_{1}\in L_{n}(\overline{C})$ and $\beta_{2}\in L_{n}(\overline{C}).$

Suppose first that $\beta_{1}<$ $\beta_{2}$ in $\overline{C}-$order. If
$\lambda$ is even then
\[
\beta_{1}=\overline{\mu}\lambda<\beta_{2}=\lambda\mu<\lambda.
\]
But this contradicts the lexicality of $\beta_{1}\in L_{n}(\overline{C}),$
which asserts that $\beta_{1}$ is greater than all of its right shifts. If
$\lambda$ is odd then, by Lemma \ref{elementary properties} (c),
\[
\overline{\beta_{1}}=\mu\overline{\lambda}>\overline{\beta_{2}}=\overline
{\lambda}\overline{\mu}>\overline{\lambda}.
\]

This also contradicts the lexicality of $\beta_{1}\in L_{n}(\overline{C}),$
which asserts that $\beta_{1}$ is greater than all of its right shifts and
$\overline{\beta}$ is less than all of its right shifts.

On the other hand, if $\beta_{2}<$ $\beta_{1}$ in $\overline{C}-$order then;
\[
\beta_{2}=\lambda\mu<\beta_{1}=\overline{\mu}\lambda.
\]
Since $\beta_{2}=\lambda\mu$ is greatest in $\langle\langle\beta_{2}%
\rangle\rangle$ and $\overline{\mu}\overline{\lambda}\in\langle\langle
\beta_{2}\rangle\rangle$ then we have:
\[
\overline{\mu}\overline{\lambda}<\beta_{2}=\lambda\mu<\beta_{1}=\overline{\mu
}\lambda.
\]
Hence, $\overline{\mu}|\lambda\mu$ and we get the word equation;
\begin{equation}
\lambda\mu=\overline{\mu}\eta\label{1}%
\end{equation}

Thus, case (2) of the theorem is satisfied.

II. $\beta_{1}\notin L_{n}(\overline{C})$ or $\beta_{2}\notin L_{n}%
(\overline{C})$ but not both$.$

Since $\beta_{1}$ and $\beta_{2}$ are primitive and the greatest words in
$\langle\langle\beta_{1}\rangle\rangle$ and $\langle\langle\beta_{2}%
\rangle\rangle$, respectively, then by Theorem 2.8 [Lu], $\beta_{1}%
=\delta\overline{\delta}$ or $\beta_{2}=\delta\overline{\delta}$ for odd
$\delta.$

\underline{Subcase 1}: $\beta_{1}=\overline{\mu}\lambda=\delta\overline
{\delta}$ for odd $\delta$ and $\beta_{2}=\lambda\mu\in L_{n}(\overline{C}).$

(a) If $\left\vert \mu\right\vert =\left\vert \delta\right\vert $ then
$\overline{\mu}=\delta$ and $\lambda=\overline{\delta}$ and hence,
$\lambda=\mu.$ But then $\beta_{2}=\lambda^{2}$ which is a contradiction to
the fact that $\beta_{2}$ is primitive.

(b) If $\left\vert \mu\right\vert <\left\vert \delta\right\vert $ then
$\left\vert \lambda\right\vert >\left\vert \delta\right\vert $ and therefore
$\delta=\overline{\mu}\delta_{1}$ and $\lambda=\lambda_{1}\overline{\delta}.$
Substituting in the relation $\overline{\mu}\lambda=\delta\overline{\delta}$
we get $\delta_{1}=\lambda_{1}.$ Consequently; $\beta_{1}=\overline{\mu
}\lambda_{1}\mu\overline{\lambda_{1}}$ and $\beta_{2}=\lambda_{1}\mu
\overline{\lambda_{1}}\mu.$ Suppose firstly that $\beta_{1}=\overline{\mu
}\lambda_{1}\mu\overline{\lambda_{1}}<\beta_{2}=\lambda_{1}\mu\overline
{\lambda_{1}}\mu.$ Since $\beta_{1}$ is the greatest word in $\langle
\langle\beta_{1}\rangle\rangle$ and $\lambda_{1}\mu\overline{\lambda_{1}%
}\overline{\mu}\in\langle\langle\beta_{1}\rangle\rangle$ we have:
\[
\lambda_{1}\mu\overline{\lambda_{1}}\overline{\mu}<\beta_{1}=\overline{\mu
}\lambda_{1}\mu\overline{\lambda_{1}}<\beta_{2}=\lambda_{1}\mu\overline
{\lambda_{1}}\mu.
\]
Hence, $\lambda_{1}\mu|\beta_{1}=\overline{\mu}\lambda_{1}\mu\overline
{\lambda_{1}}$ and therefore $\lambda_{1}\mu$ is a left factor of $\beta_{1}$
of the same length as $\overline{\mu}\lambda_{1}.$ Thus we get the word
equation;
\begin{equation}
\lambda_{1}\mu=\overline{\mu}\lambda_{1}. \label{2}%
\end{equation}
Therefore, case (3) of the theorem is satisfied.

Suppose secondly that $\beta_{2}=\lambda_{1}\mu\overline{\lambda_{1}}\mu
<\beta_{1}=\overline{\mu}\lambda_{1}\mu\overline{\lambda_{1}}.$ Since
$\beta_{2}$ is the greatest word in $\langle\langle\beta_{2}\rangle\rangle$
and $\overline{\mu}\lambda_{1}\overline{\mu}\overline{\lambda_{1}}\in
\langle\langle\beta_{2}\rangle\rangle$ we have:%
\[
\overline{\mu}\lambda_{1}\overline{\mu}\overline{\lambda_{1}}<\beta
_{2}=\lambda_{1}\mu\overline{\lambda_{1}}\mu<\beta_{1}=\overline{\mu}%
\lambda_{1}\mu\overline{\lambda_{1}}.
\]
Hence, $\overline{\mu}\lambda_{1}|\beta_{2}=\lambda_{1}\mu\overline
{\lambda_{1}}\mu$ and once again we get the word equation (\ref{2}).

(c) If $\left\vert \mu\right\vert >\left\vert \delta\right\vert $ then
$\left\vert \lambda\right\vert <\left\vert \delta\right\vert $ and hence
$\overline{\mu}=\delta\mu_{1}$ and $\overline{\delta}=\delta_{1}\lambda$ from
which one can easily conclude that $\mu_{1}=\delta_{1},\beta_{1}=\overline
{\mu_{1}}\overline{\lambda}\mu_{1}\lambda$ and $\beta_{2}=\lambda\mu
_{1}\lambda\overline{\mu_{1}}.$ Suppose firstly that $\beta_{2}=\lambda\mu
_{1}\lambda\overline{\mu_{1}}<\beta_{1}=\overline{\mu_{1}}\overline{\lambda
}\mu_{1}\lambda$. Since $\beta_{2}$ is the greatest word in $\langle
\langle\beta_{2}\rangle\rangle$ and $\overline{\mu_{1}}\overline{\lambda}%
\mu_{1}\overline{\lambda}\in\langle\langle\beta_{2}\rangle\rangle$ we have:%
\[
\overline{\mu_{1}}\overline{\lambda}\mu_{1}\overline{\lambda}<\beta
_{2}=\lambda\mu_{1}\lambda\overline{\mu_{1}}<\beta_{1}=\overline{\mu_{1}%
}\overline{\lambda}\mu_{1}\lambda.
\]
Hence, $\overline{\mu_{1}}\overline{\lambda}|\beta_{2}=\lambda\mu_{1}%
\lambda\overline{\mu_{1}}$ and we get the word equation;%
\begin{equation}
\lambda\mu_{1}=\overline{\mu_{1}}\overline{\lambda}. \label{3}%
\end{equation}
Therefore, case (4) of the theorem is satisfied.

On the other hand, if $\beta_{1}=\overline{\mu_{1}}\overline{\lambda}\mu
_{1}\lambda<\beta_{2}=\lambda\mu_{1}\lambda\overline{\mu_{1}}$ then
\[
\lambda\overline{\mu_{1}}\overline{\lambda}\mu_{1}<\beta_{1}=\overline{\mu
_{1}}\overline{\lambda}\mu_{1}\lambda<\beta_{2}=\lambda\mu_{1}\lambda
\overline{\mu_{1}}.
\]
Hence, $\lambda|\beta_{1}=\overline{\mu_{1}}\overline{\lambda}\mu_{1}\lambda.$
Thus $\lambda$ is an initial subword of $\overline{\mu_{1}}\overline{\lambda
}.$ Therefore, we get the word equation
\begin{equation}
\overline{\mu_{1}}\overline{\lambda}=\lambda\eta, \label{4}%
\end{equation}
which is of the same type as equation (\ref{1}) and case (5) of the theorem is satisfied.

\underline{Subcase 2}: $\beta_{1}=\overline{\mu}\lambda\in$ $L_{n}%
(\overline{C})$ and $\beta_{2}=\lambda\mu=\delta\overline{\delta}$ for odd
$\delta.$

(a) If $\left\vert \mu\right\vert =\left\vert \delta\right\vert $ then
$\lambda=\delta$ and $\mu=\overline{\delta}$ and hence, $\lambda=\overline
{\mu}.$ But then $\beta_{1}=\lambda^{2}$ which is a contradiction to the fact
that $\beta_{1}$ is primitive.

(b) If $\left\vert \mu\right\vert <\left\vert \delta\right\vert $ then
$\left\vert \lambda\right\vert >\left\vert \delta\right\vert .$ Hence,
$\lambda=\delta\lambda_{1}$ and $\overline{\delta}=\delta_{1}\mu$ where
$\lambda_{1}=\delta_{1}.$ Therefore, $\beta_{1}=\overline{\mu}\lambda_{1}%
\mu\lambda_{1}$ and $\beta_{2}=\overline{\lambda_{1}}\overline{\mu}\lambda
_{1}\mu.$ Now, if $\beta_{1}<\beta_{2}$ then;%
\[
\overline{\lambda_{1}}\overline{\mu}\overline{\lambda_{1}}\mu<\beta
_{1}=\overline{\mu}\lambda_{1}\mu\lambda_{1}<\beta_{2}=\overline{\lambda_{1}%
}\overline{\mu}\lambda_{1}\mu.
\]
Hence, we get the word equation
\begin{equation}
\overline{\mu}\lambda_{1}=\overline{\lambda_{1}}\overline{\mu}, \label{5}%
\end{equation}
which is of the same type as equation (\ref{1}) and case (6) of the theorem is
satisfied. Similarly, if $\beta_{2}<\beta_{1}$ then%
\[
\overline{\mu}\lambda_{1}\mu\overline{\lambda_{1}}<\beta_{2}=\overline
{\lambda_{1}}\overline{\mu}\lambda_{1}\mu<\beta_{1}=\overline{\mu}\lambda
_{1}\mu\lambda_{1}.
\]
Therefore we get once again the same word equation \ref{5}.

(c) If $\left\vert \mu\right\vert >\left\vert \delta\right\vert $ then
$\left\vert \lambda\right\vert <\left\vert \delta\right\vert .$ Letting
$\delta=\lambda\delta_{1}$ and $\mu=\mu_{1}\overline{\delta}$, one can easily
see that $\delta_{1}=\mu_{1}$ and therefore $\beta_{1}=\overline{\mu}%
_{1}\lambda\mu_{1}\lambda$ and $\beta_{2}=\lambda\mu_{1}\overline{\lambda
}\overline{\mu}_{1}.$ If $\beta_{1}<\beta_{2}$ then;%
\[
\lambda\mu_{1}\lambda\overline{\mu}_{1}<\beta_{1}=\overline{\mu}_{1}\lambda
\mu_{1}\lambda<\beta_{2}=\lambda\mu_{1}\overline{\lambda}\overline{\mu}_{1}%
\]
and hence $\lambda\mu_{1}=\overline{\mu}_{1}\lambda$ which is a word equation
of the same type as \ref{2} and case (7) of the theorem is satisfied.

If $\beta_{2}<\beta_{1}$ then;%
\[
\overline{\mu}_{1}\lambda\mu_{1}\overline{\lambda}<\beta_{2}=\lambda\mu
_{1}\overline{\lambda}\overline{\mu}_{1}<\beta_{1}=\overline{\mu}_{1}%
\lambda\mu_{1}\lambda
\]
once again we end with the same word equation $\lambda\mu_{1}=\overline{\mu
}_{1}\lambda$.

III. $\beta_{1}\notin L_{n}(\overline{C})$ and $\beta_{2}\notin L_{n}%
(\overline{C}).$

By Theorem 2.8 [Lu], $\beta_{1}=\overline{\mu}\lambda=\delta\overline{\delta}$
and $\beta_{2}=\lambda\mu=\rho\overline{\rho}$ for odd $\delta$ and $\rho.$ We
shall prove that this case can not happen. Clearly we have $\left\vert
\delta\right\vert =\left\vert \rho\right\vert .$ Consider the following subcases:

Subcase 1: $\left\vert \lambda\right\vert =\left\vert \delta\right\vert
=\left\vert \rho\right\vert .$

We have $\overline{\mu}=\delta,\lambda=\overline{\delta}$ and $\lambda
=\rho,\mu=\overline{\rho}.$ Thus $\overline{\mu}=\delta=\overline{\lambda
}=\overline{\rho}=\mu.$ Therefore $\lambda$ and $\mu$ are powers of $M$ which
implies that $\beta_{1}$ and $\beta_{2}$ are not primitive, a contradiction.

Subcase 2: $\left\vert \lambda\right\vert <\left\vert \delta\right\vert
=\left\vert \rho\right\vert .$ Hence, $\left\vert \mu\right\vert >\left\vert
\delta\right\vert =\left\vert \rho\right\vert .$

We have $\overline{\mu}=\delta\mu_{2},\overline{\delta}=\delta_{1}\lambda$ and
$\rho=\lambda\rho_{1},\mu=\mu_{1}\overline{\rho}.$ It is easily seen that
$\mu_{2}=\delta_{1}$ and $\rho_{1}=\mu_{1}.$ Hence, we may write
$\overline{\mu}=\delta\mu_{2},\overline{\delta}=\mu_{2}\lambda$ and
$\rho=\lambda\mu_{1},\mu=\mu_{1}\overline{\rho}.$ In particular,
$\overline{\mu}=\overline{\mu_{2}\lambda}\mu_{2},$ and $\mu=\mu_{1}%
\overline{\lambda\mu_{1}}.$ Combining both these two equations we see that
$\mu=\mu_{2}\lambda\overline{\mu_{2}}=\mu_{1}\overline{\lambda\mu_{1}}.$ This
implies that $\mu_{1}=\mu_{2}$ and $\lambda=\overline{\lambda}=M^{l}%
,l=\left\vert \lambda\right\vert .$ But this contradicts the fact that the
first letter of $\beta_{2}=\lambda\mu,$ which is primitive and the greatest
word in $\langle\langle\beta_{2}\rangle\rangle,$ must be $R$ and not $M.$

Subcase 3: $\left\vert \lambda\right\vert >\left\vert \delta\right\vert
=\left\vert \rho\right\vert .$ Hence, $\left\vert \mu\right\vert <\left\vert
\delta\right\vert =\left\vert \rho\right\vert .$

Then $\delta=\overline{\mu}\delta_{1},\lambda=\lambda_{2}\overline{\delta}$
and $\lambda=\rho\lambda_{1},\overline{\rho}=\rho_{1}\mu.$ Once again it is
easily seen that $\delta_{1}=\lambda_{2}$ and $\lambda_{1}=\rho_{1}.$ Hence,
we conclude that $\lambda=\lambda_{2}\mu\overline{\lambda_{2}}=\overline
{\lambda_{1}\mu}\lambda_{1}.$ Therefore $\mu=\mu=M^{l},l=\left\vert
\mu\right\vert $ and this contradicts the fact that the first letter of
$\beta_{1}=\overline{\mu}\lambda,$ which is primitive and the greatest word in
$\langle\langle\beta_{1}\rangle\rangle,$ must be $R$ and not $M.$
\end{proof}

\begin{lemma}
If $\beta=EFE,$ where $E,F\in W$ then $\beta\notin L_{n}(\overline{C})$ and
$\beta\notin L_{n}(C).$
\end{lemma}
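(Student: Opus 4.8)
The plan is to exploit the fact that in $\beta=EFE$ the factor $E$ occurs simultaneously as a prefix and as a proper suffix, and then to extract \emph{contradictory} parity requirements from the two halves of the definition of a $D$-lexical word.

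First I would record the structural observation. Since $\beta=EFE$, the word $E$ is a left factor of $\beta$, i.e. $E\mid\beta$ with $\beta=E(FE)$, and $E$ is simultaneously a right factor (right shift) of $\beta$; as the standing assumption rules out empty words we have $|E|<|\beta|$, so $E$ is a \emph{proper} right factor. Passing to complements, $\overline{\beta}=\overline{E}\,\overline{F}\,\overline{E}$, whence $\overline{E}\mid\overline{\beta}$ and $\overline{E}$ is a proper right factor of $\overline{\beta}$. The single arithmetic input I would isolate is that complementation preserves parity: since $\overline{R}=L$, $\overline{L}=R$ and $\overline{M}=M$, the words $E$ and $\overline{E}$ contain the same number of $M$'s, hence are simultaneously even or simultaneously odd.

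Next I would test the two lexicality requirements against each other. Assume $\beta\in L_n(C)$. Being greatest among its right factors forces $\beta>E$; since $E\mid\beta$, the $C$-order rule ($uv>u$ iff $u$ is even) gives that $E$ must be even. On the other hand, $\overline{\beta}$ must be least among its right factors, so $\overline{\beta}<\overline{E}$; since $\overline{E}\mid\overline{\beta}$, the same $C$-order rule yields $\overline{\beta}<\overline{E}$ exactly when $\overline{E}$ is odd, forcing $\overline{E}$, and therefore $E$, to be odd. This contradicts the previous line, so $\beta\notin L_n(C)$.

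Finally I would run the identical argument in $\overline{C}$-order, where the parity conditions are merely interchanged: greatestness of $\beta$ forces $E$ odd (since in $\overline{C}$-order $uv>u$ iff $u$ is odd), while leastness of $\overline{\beta}$ forces $\overline{E}$, hence $E$, to be even, again a contradiction, giving $\beta\notin L_n(\overline{C})$. I do not expect any serious obstacle; the only point needing care is the bookkeeping of the order conventions, namely applying the prefix rule to the pairs $(E,\beta)$ and $(\overline{E},\overline{\beta})$ with the correct parity, combined with the parity-invariance of complementation, which is exactly what renders the two halves of the $D$-lexical definition incompatible for a word of the form $EFE$.
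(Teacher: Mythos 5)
Your proof is correct and is essentially the same as the paper's: both derive $\beta>E$ and $\overline{\beta}<\overline{E}$ from the two halves of the $D$-lexicality definition, translate these via the prefix rule into opposite parity requirements on $E$ and $\overline{E}$, and contradict the fact that complementation preserves parity. The only cosmetic difference is that you write out both orders explicitly, while the paper proves the $\overline{C}$ case and notes the $C$ case is symmetric.
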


\begin{proof}
Both cases are similar so we prove the first one, namely; $\beta\notin
L_{n}(\overline{C}).$ Suppose the contrary, then $\beta=EFE\in L_{n}%
(\overline{C})$ implies that $\beta=EFE>E$. Hence, in $\overline{C}-$order,
$E$ must be an odd word. On the other hand, $\overline{\beta}=\overline
{EFE}<\overline{E}$ implies that, in $\overline{C}-$order, $\overline{E}$ is
even. Since $E$ and $\overline{E}$ have the same parity we get a contradiction.
\end{proof}

Now, the $\overline{C}-$lexicality of $\beta_{1}$ or $\beta_{2}$ which was
emphasized in Theorem \ref{characterization} allows us to make the following
refinement of the previous theorem.

\begin{theorem}
\label{explicit characterization}If $\langle\beta_{1}\overline{\beta_{1}%
}\rangle=\langle\beta_{2}\overline{\beta_{2}}\rangle$ where $\beta_{1}$ and
$\beta_{2}$ are the greatest words in $\langle\langle\beta_{1}\rangle
\rangle\in M(n)$ and $\langle\langle\beta_{2}\rangle\rangle\in M(n),$
respectively, and $\langle\langle\beta_{1}\rangle\rangle\neq\langle
\langle\beta_{2}\rangle\rangle$ then one of the following occurs;

(a) $\beta_{1}=\overline{\mu}\lambda=\overline{\mu}^{2}X_{1}\in L_{n}%
(\overline{C}),\beta_{2}=\lambda\mu=\overline{\mu}X_{1}\mu\in L_{n}%
(\overline{C}).$

(b) $\beta_{1}=\overline{\mu_{1}}\overline{\lambda}\mu_{1}\lambda=\lambda
X_{1}\overline{\lambda}\overline{\lambda}\overline{X_{1}}\lambda$ and
$\beta_{2}=\lambda\mu_{1}\lambda\overline{\mu_{1}}=\lambda\overline{\lambda
}\overline{X_{1}}\lambda\lambda X_{1}\in L_{n}(\overline{C}).$
\end{theorem}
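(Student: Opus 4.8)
The plan is to start from Theorem \ref{characterization}: since we assume $\langle\langle\beta_{1}\rangle\rangle\neq\langle\langle\beta_{2}\rangle\rangle$, alternative (1) is excluded, so exactly one of the configurations (2)--(7) must hold. I would then show that (3), (4), (6) and (7) cannot occur, that (2) forces alternative (a), and that (5) forces alternative (b). Throughout I would lean on two facts already available: Proposition \ref{Eq XY=Y-X}, which turns a relation $XY=\overline{Y}X$ into the imprimitivity of $XY\overline{XY}$, and the lemma immediately preceding this theorem, which forbids any word of $L_{n}(\overline{C})$ from having the form $EFE$. Since every $\beta_{i}$ here is primitive and (in the relevant slot) $\overline{C}$-lexical, each of these facts becomes a contradiction once its hypothesis is met.

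To eliminate (3), (6) and (7) I would use the imprimitivity route. In each of these three cases the member of $\{\beta_{1},\beta_{2}\}$ that is not $\overline{C}$-lexical is exactly the word $\delta\overline{\delta}$ supplied by Theorem 2.8 of \cite{Lu}, and the accompanying word equation is of the type $XY=\overline{Y}X$: in (3) it is (\ref{2}) with $X=\lambda_{1},Y=\mu$, in (6) it is (\ref{5}) with $X=\overline{\mu},Y=\lambda_{1}$, and in (7) it is $\lambda\mu_{1}=\overline{\mu_{1}}\lambda$ with $X=\lambda,Y=\mu_{1}$. In all three I would verify that $\delta=XY$, so that $\delta\overline{\delta}=XY\overline{XY}$. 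Proposition \ref{Eq XY=Y-X} then says this word is not primitive, contradicting the primitivity of $\beta_{1}$ in case (3) and of $\beta_{2}$ in cases (6) and (7). Hence (3), (6) and (7) are impossible.

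For (4), and for the degenerate length situations inside (2) and (5), I would instead substitute the governing word equation into the $\overline{C}$-lexical word and read off an $EFE$ pattern. In (4) the equation (\ref{3}) is $\lambda\mu_{1}=\overline{\mu_{1}}\overline{\lambda}$; replacing the leading block of $\beta_{2}=\lambda\mu_{1}\lambda\overline{\mu_{1}}$ gives $\beta_{2}=\overline{\mu_{1}}\,\overline{\lambda}\lambda\,\overline{\mu_{1}}$, which is $EFE$ with $E=\overline{\mu_{1}}$, contradicting the lemma; so (4) cannot occur. The same device settles the boundary lengths. In (2), reading (\ref{1}) as the statement that $\overline{\mu}$ is a left factor of $\lambda\mu$, the possibility $|\mu|>|\lambda|$ makes $\lambda$ a proper prefix of $\overline{\mu}$, whence $\beta_{1}=\overline{\mu}\lambda=\lambda\nu\lambda=EFE$, while $|\mu|=|\lambda|$ forces $\beta_{1}=\overline{\mu}^{2}$, imprimitive; both are impossible, so $|\mu|<|\lambda|$. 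Similarly in (5), either of the cases $|\lambda|\geq|\mu_{1}|$ turns $\beta_{2}=\lambda\mu_{1}\lambda\overline{\mu_{1}}$ into a word of the form $EFE$ (after using (\ref{4}) to rewrite the relevant block), so the only survivor is $|\lambda|<|\mu_{1}|$.

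Once the surviving length is isolated, the refinements drop out. In (2) the inequality $|\mu|<|\lambda|$ makes $\overline{\mu}$ a proper prefix of $\lambda$, i.e. $\lambda=\overline{\mu}X_{1}$, so $\beta_{1}=\overline{\mu}^{2}X_{1}$ and $\beta_{2}=\overline{\mu}X_{1}\mu$; this is alternative (a), and equivalently it is Proposition \ref{XY=Y-Z}(3) applied to (\ref{1}). In (5) the inequality $|\lambda|<|\mu_{1}|$ makes $\lambda$ a proper prefix of $\overline{\mu_{1}}$, i.e. $\overline{\mu_{1}}=\lambda X_{1}$ (again Proposition \ref{XY=Y-Z}(3), now applied to (\ref{4})); substituting $\mu_{1}=\overline{\lambda}\,\overline{X_{1}}$ into $\beta_{1}=\overline{\mu_{1}}\overline{\lambda}\mu_{1}\lambda$ and $\beta_{2}=\lambda\mu_{1}\lambda\overline{\mu_{1}}$ yields precisely the two expressions of alternative (b). I expect the main obstacle to be the bookkeeping in this final step: one must carry the length trichotomy in (2) and (5) with care, confirm at each boundary that the advertised $EFE$ (or imprimitive) pattern genuinely appears only after the correct substitution of the word equation, and check that the single surviving inequality is exactly the one producing the prefix relations $\lambda=\overline{\mu}X_{1}$ and $\overline{\mu_{1}}=\lambda X_{1}$ demanded by (a) and (b).
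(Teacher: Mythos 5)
Your proposal is correct, and its skeleton is the same as the paper's: rule out alternative (1) of Theorem \ref{characterization}, eliminate cases (3), (6), (7) by recognizing the non-lexical member of $\{\beta_{1},\beta_{2}\}$ as $\overline{Y}XY\overline{X}$ or $XY\overline{XY}$ and invoking Proposition \ref{Eq XY=Y-X}, eliminate (4), and reduce (2) and (5) by the length trichotomy so that only the prefix relations $\lambda=\overline{\mu}X_{1}$ and $\overline{\mu_{1}}=\lambda X_{1}$ survive, which are exactly alternatives (a) and (b). Where you genuinely differ is in the eliminations that the paper routes through the structural propositions of Section 3. For case (4) the paper applies Proposition \ref{ZW=WZ} to $\lambda\mu_{1}=\overline{\mu_{1}}\overline{\lambda}$, splits on whether $\left\vert \lambda\right\vert =\left\vert \mu_{1}\right\vert$, writes out the resulting alternating forms of $\beta_{1}$ and $\beta_{2}$, and only then applies the $EFE$ lemma; you instead substitute the word equation once into $\beta_{2}=\lambda\mu_{1}\lambda\overline{\mu_{1}}$ to get $\overline{\mu_{1}}\,\overline{\lambda}\lambda\overline{\mu_{1}}$, an $EFE$ word with $E=\overline{\mu_{1}}$ nonempty, and are done in one line. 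Likewise, in the intermediate-length subcases of (2) and (5) the paper invokes case (2) of Proposition \ref{XY=Y-Z} to obtain broken-alternating expressions in $Y_{0},X_{0}$ and reads the forbidden $EFE$ pattern off those normal forms; you observe directly that the governing equation makes the shorter word a proper prefix of the longer one, so that $\beta_{1}=\lambda\nu\lambda$ (in case (2)) or $\beta_{2}=\overline{\mu_{1}}\nu\overline{\mu_{1}}$ (in case (5)) with both outer and inner factors nonempty, and the $EFE$ lemma applies at once. Your route is shorter and needs only prefix comparison plus the $EFE$ lemma; the paper's route is heavier but produces the explicit alternating decompositions as a by-product, which it reuses for the examples in the concluding section. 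Both arguments are complete and arrive at the same two surviving configurations.
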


\begin{proof}
By Theorem \ref{characterization}, we ought to consider the following cases:

I. $\beta_{2}=\lambda\mu\in L_{n}(\overline{C}),\beta_{1}=\overline{\mu
}\lambda\in L_{n}(\overline{C})$ and $\lambda\mu=\overline{\mu}\eta.$ Applying
the conclusion of Proposition \ref{XY=Y-Z} by substituting $X,Y$ and $Z$ as
shown in the following table (namely: $X=\lambda,Y=\mu$ and $Z=\eta$):%
\[%
\begin{array}
[c]{ccc}%
X & Y & Z\\
\lambda & \mu & \eta
\end{array}
\]
we have the following subcases:

\underline{Subcase 1}: If $\left\vert \lambda\right\vert =\left\vert
\mu\right\vert =\left\vert \eta\right\vert $ then $\lambda=\overline{\eta}$
and we get the word equation: $\lambda\mu=\overline{\mu}\overline{\lambda}.$
Applying Proposition \ref{ZW=WZ} by substituting,%
\[%
\begin{array}
[c]{cc}%
Z & W\\
\lambda & \mu
\end{array}
\]
it follows that $\lambda=\overline{\mu}$ and therefore $\beta_{1}%
=\overline{\mu}\lambda=\lambda^{2}$ is not primitive, a contradiction.

\underline{Subcase 2}: If $\left\vert \lambda\right\vert =\left\vert
\eta\right\vert <\left\vert \mu\right\vert $ then either (a) $\overline
{\lambda}=Y_{0}X_{0}$\thinspace$,$ $\eta=\overline{X_{0}}Y_{0}$ and
$\mu=(Y_{0}X_{0}\overline{Y_{0}X_{0}})^{n_{1}}Y_{0},n_{1}\geq1,$ which
implies:%
\begin{align*}
\beta_{2}  &  =\lambda\mu=\overline{Y_{0}X_{0}}(Y_{0}X_{0}\overline{Y_{0}%
X_{0}})^{n_{1}}Y_{0}=(\overline{Y_{0}X_{0}}Y_{0}X_{0})^{n_{1}}\overline
{Y_{0}X_{0}}Y_{0},\\
\beta_{1}  &  =\overline{\mu}\lambda=(\overline{Y_{0}X_{0}}Y_{0}X_{0})^{n_{1}%
}\overline{Y_{0}}\overline{Y_{0}X_{0}}.
\end{align*}
By the previous lemma this leads to a contradiction, since $\beta_{1}\in
L_{n}(\overline{C})$ have the same left and right factor $\overline{Y_{0}%
X_{0}}$, or (b) $\lambda=Y_{0}X_{0},\eta=\overline{X_{0}}Y_{0}$ and
$\mu=(\overline{Y_{0}X_{0}}Y_{0}X_{0})^{n_{2}}\overline{Y_{0}X_{0}}Y_{0}$
which implies:%
\begin{align*}
\beta_{2}  &  =\lambda\mu=Y_{0}X_{0}(\overline{Y_{0}X_{0}}Y_{0}X_{0})^{n_{2}%
}\overline{Y_{0}X_{0}}Y_{0}=(Y_{0}X_{0}\overline{Y_{0}X_{0}})^{n_{2}+1}%
Y_{0},\\
\beta_{1}  &  =\overline{\mu}\lambda=(Y_{0}X_{0}\overline{Y_{0}X_{0}})^{n_{2}%
}Y_{0}X_{0}\overline{Y_{0}}Y_{0}X_{0}=(Y_{0}X_{0}\overline{Y_{0}X_{0}}%
)^{n_{2}+1}X_{0}.
\end{align*}
But then we have a contradiction, according the previous lemma, since
$\beta_{2}\in L_{n}(\overline{C}).$

\underline{Subcase 3}: If $\left\vert \lambda\right\vert =\left\vert
\eta\right\vert >\left\vert \mu\right\vert $ then $\lambda=\overline{\mu}%
X_{1}$ and $\eta=X_{1}\mu$ which implies
\begin{align*}
\beta_{2}  &  =\overline{\mu}X_{1}\mu,\\
\beta_{1}  &  =\overline{\mu}\overline{\mu}X_{1}=\overline{\mu}^{2}X_{1}.
\end{align*}
This case is possible.

II. $\beta_{1}=\overline{\mu}\lambda_{1}\mu\overline{\lambda_{1}},\beta
_{2}=\lambda_{1}\mu\overline{\lambda_{1}}\mu$ and $\lambda_{1}\mu
=\overline{\mu}\lambda_{1}.$ Applying Proposition \ref{Eq XY=Y-X} by
substituting%
\[%
\begin{array}
[c]{cc}%
X & Y\\
\lambda_{1} & \mu
\end{array}
\]
we conclude that $\beta_{1}=\overline{\mu}\lambda_{1}\mu\overline{\lambda_{1}%
}$ is not primitive, a contradiction.

III. $\beta_{2}=\lambda\mu_{1}\lambda\overline{\mu_{1}}\in L_{n}(\overline
{C}),\beta_{1}=\overline{\mu_{1}}\overline{\lambda}\mu_{1}\lambda$ and
$\lambda\mu_{1}=\overline{\mu_{1}}\overline{\lambda}.$ If $\left\vert
\lambda\right\vert \neq\left\vert \mu_{1}\right\vert $ then applying
Proposition \ref{ZW=WZ} by the substitution%
\[%
\begin{array}
[c]{cc}%
Z & W\\
\lambda & \mu_{1}%
\end{array}
\]
we get\ that $\lambda$ and $\mu_{1}$ are powers of $M$ and therefore
$\beta_{1}$ and $\beta_{2}$ are not primitive which is a contradiction, or
$\lambda=(\overline{E}E)^{n_{1}}\overline{E},\mu_{1}=(E\overline{E})^{n_{2}}E$
and hence, $\beta_{1}=\overline{\mu_{1}}\overline{\lambda}\mu_{1}%
\lambda=(\overline{E}E)^{n_{1}+n_{2}+1}(E\overline{E})^{n_{1}+n_{2}+1}$ and
$\beta_{2}=\lambda\mu_{1}\lambda\overline{\mu_{1}}=(\overline{E}%
E)^{2n_{1}+n_{2}+1}\overline{E}^{2}(E\overline{E})^{n_{2}}.$

If $\left\vert \lambda\right\vert =\left\vert \mu_{1}\right\vert $ then
$\lambda=\overline{\mu_{1}}$ and hence;%
\begin{align*}
\beta_{2}  &  =\lambda\mu_{1}\lambda\overline{\mu_{1}}=\overline{\mu_{1}}%
\mu_{1}\overline{\mu_{1}}\overline{\mu_{1}},\\
\beta_{1}  &  =\overline{\mu_{1}}\overline{\lambda}\mu_{1}\lambda
=\overline{\mu_{1}}\mu_{1}\mu_{1}\overline{\mu_{1}}.
\end{align*}
In both cases, by the previous lemma this leads to a contradiction since
$\beta_{2}\in L_{n}(\overline{C}).$

IV. $\beta_{1}=\overline{\mu_{1}}\overline{\lambda}\mu_{1}\lambda,\beta
_{2}=\lambda\mu_{1}\lambda\overline{\mu_{1}}\in L_{n}(\overline{C})$ and
$\overline{\mu_{1}}\overline{\lambda}=\lambda\eta.$ Applying the conclusion of
Proposition \ref{XY=Y-Z} making the following substitution%
\[%
\begin{array}
[c]{ccc}%
X & Y & Z\\
\overline{\mu_{1}} & \overline{\lambda} & \eta
\end{array}
\]
we have the following subcases:

\underline{Subcase 1}: If $\left\vert \mu_{1}\right\vert =\left\vert
\lambda\right\vert =\left\vert \eta\right\vert $ then $\overline{\mu_{1}%
}=\overline{\eta}$ and we get the word equation $\lambda\mu_{1}=\overline
{\mu_{1}}\overline{\lambda}.$ Applying Proposition \ref{ZW=WZ} by substituting%
\[%
\begin{array}
[c]{cc}%
Z & W\\
\lambda & \mu_{1}%
\end{array}
\]
it follows that $\lambda=\overline{\mu_{1}}$ and therefore; $\beta
_{1}=\overline{\mu_{1}}\mu_{1}\mu_{1}\overline{\mu_{1}}$ and $\beta
_{2}=\overline{\mu_{1}}\mu_{1}\overline{\mu_{1}}\overline{\mu_{1}}$ which is
similar to a previous case and can not occur.

\underline{Subcase 2}: If $\left\vert \mu_{1}\right\vert =\left\vert
\eta\right\vert <\left\vert \lambda\right\vert $ then either $\mu_{1}%
=Y_{0}X_{0}$\thinspace$,$ $\eta=\overline{X_{0}}Y_{0}$ and $\overline{\lambda
}=(Y_{0}X_{0}\overline{Y_{0}X_{0}})^{n_{1}}Y_{0},n_{1}\geq1,$ which implies:%
\begin{align*}
\beta_{1}  &  =\overline{\mu_{1}}\overline{\lambda}\mu_{1}\lambda
=(\overline{Y_{0}X_{0}}Y_{0}X_{0})^{n_{1}}\overline{Y_{0}X_{0}}Y_{0}%
(Y_{0}X_{0}\overline{Y_{0}X_{0}})^{n_{1}}Y_{0}X_{0}\overline{Y_{0}},\\
\beta_{2}  &  =\lambda\mu_{1}\lambda\overline{\mu_{1}}=(\overline{Y_{0}X_{0}%
}Y_{0}X_{0})^{n_{1}}\overline{Y_{0}}Y_{0}X_{0}(\overline{Y_{0}X_{0}}Y_{0}%
X_{0})^{n_{1}}\overline{Y_{0}}\overline{Y_{0}X_{0}},
\end{align*}
or $\overline{\mu_{1}}=Y_{0}X_{0}$\thinspace$,$ $\eta=\overline{X_{0}}Y_{0}$
and $\overline{\lambda}=(\overline{Y_{0}X_{0}}Y_{0}X_{0})^{n_{2}}%
\overline{Y_{0}X_{0}}Y_{0}$ which implies:%
\begin{align*}
\beta_{1}  &  =\overline{\mu_{1}}\overline{\lambda}\mu_{1}\lambda=(Y_{0}%
X_{0}\overline{Y_{0}X_{0}})^{n_{2}+1}Y_{0}(\overline{Y_{0}X_{0}}Y_{0}%
X_{0})^{n_{2}+1}\overline{Y_{0}},\\
\beta_{2}  &  =\lambda\mu_{1}\lambda\overline{\mu_{1}}=(Y_{0}X_{0}%
\overline{Y_{0}X_{0}})^{n_{2}}Y_{0}X_{0}\overline{Y_{0}}(\overline{Y_{0}X_{0}%
}Y_{0}X_{0})^{n_{2}+1}\overline{Y_{0}}Y_{0}X_{0}.
\end{align*}
Both cases can not happen, by the previous lemma, since $\beta_{2}=\lambda
\mu_{1}\lambda\overline{\mu_{1}}\in L_{n}(\overline{C}).$

\underline{Subcase 3}: If $\left\vert \mu_{1}\right\vert =\left\vert
\eta\right\vert >\left\vert \lambda\right\vert $ then $\overline{\mu_{1}%
}=\lambda X_{1}$ and $\eta=X_{1}\overline{\lambda}$ which implies:%
\begin{align*}
\beta_{1}  &  =\overline{\mu_{1}}\overline{\lambda}\mu_{1}\lambda=\lambda
X_{1}\overline{\lambda}\overline{\lambda X_{1}}\lambda,\\
\beta_{2}  &  =\lambda\mu_{1}\lambda\overline{\mu_{1}}=\lambda\overline
{\lambda X_{1}}\lambda\lambda X_{1}.
\end{align*}
This case is possible.

V. $\beta_{1}=\overline{\mu}\lambda_{1}\mu\lambda_{1},\beta_{2}=\overline
{\lambda_{1}}\overline{\mu}\lambda_{1}\mu$ and $\overline{\mu}\lambda
_{1}=\overline{\lambda_{1}}\overline{\mu}.$ Making the substitution%
\[%
\begin{array}
[c]{cc}%
X & Y\\
\overline{\mu} & \lambda_{1}%
\end{array}
\]
in Proposition \ref{Eq XY=Y-X}, it follows that $\beta_{2}=\overline
{\lambda_{1}}\overline{\mu}\lambda_{1}\mu$ is not primitive, a contradiction.

VI. $\beta_{1}=\overline{\mu}_{1}\lambda\mu_{1}\lambda,\beta_{2}=\lambda
\mu_{1}\overline{\lambda}\overline{\mu}_{1}$ and $\lambda\mu_{1}=\overline
{\mu}_{1}\lambda.$ By Proposition \ref{Eq XY=Y-X}, we get that $\beta
_{2}=\lambda\mu_{1}\overline{\lambda}\overline{\mu}_{1}$ is not primitive, a contradiction.
\end{proof}

\section{Concluding remarks}

Theorems \ref{characterization} and \ref{explicit characterization} may shed
some light on a problem, posed by Lu \cite[p. 2192]{Lu}, whether there is a
bijection between the sets $M(n)$ and $U(2n)$. In fact, the relation
\[
\Psi:\mathbf{M}(n)\rightarrow\mathbf{U}(2n)
\]
defined as $\Psi(\langle\langle\beta\rangle\rangle)=\langle\beta
\overline{\beta}\rangle,$ where $\langle\langle\beta\rangle\rangle\in M(n)$
and $\beta$ is the greatest word in $\langle\langle\beta\rangle\rangle$, is a
function by Lemma \ref{primitive and greatest implies primitive}, but need not
be a bijection. The problem seems to be difficult and needs further investigation.

We conclude by some examples illustrating the last theorem. For $n=4$ and by
the computations which was carried out in \cite[p. 2189]{Lu}, we have
$\left\vert \mathbf{M}(4)\right\vert =10.$ The elements $\langle\langle
\beta\rangle\rangle$ of $M(4)$ for which $\beta$ is the greatest word in
$\langle\langle\beta\rangle\rangle$ are
\begin{align*}
&  \langle\langle R^{3}M\rangle\rangle,\langle\langle R^{3}L\rangle
\rangle,\langle\langle R^{2}M^{2}\rangle\rangle,\langle\langle R^{2}%
ML\rangle\rangle,\langle\langle R^{2}LM\rangle\rangle,\\
&  \langle\langle R^{2}L^{2}\rangle\rangle,\langle\langle RM^{3}\rangle
\rangle,\langle\langle RM^{2}L\rangle\rangle,\langle\langle RMRL\rangle
\rangle,\langle\langle RMLM\rangle\rangle.
\end{align*}
Now, if $\langle\langle\beta\rangle\rangle\in\left\{  \langle\langle
R^{3}M\rangle\rangle,\langle\langle R^{2}ML\rangle\rangle\right\}  $ then
$\langle\beta\overline{\beta}\rangle=\langle R^{3}ML^{3}M\rangle,$ if
$\langle\langle\beta\rangle\rangle\in\{\langle\langle R^{3}L\rangle
\rangle,\langle\langle R^{2}L^{2}\rangle\rangle\}$ then $\langle\beta
\overline{\beta}\rangle=\langle R^{4}L^{4}\rangle,$ and if $\langle
\langle\beta\rangle\rangle\in\{\langle\langle R^{2}M^{2}\rangle\rangle
,\langle\langle RM^{2}L\rangle\rangle\}$ then $\langle\beta\overline{\beta
}\rangle=\langle R^{2}M^{2}L^{2}M^{2}\rangle.$ Notice that all the three pairs
satisfy condition (a) of Theorem \ref{explicit characterization}.

On the other hand, if $\lambda=R^{2}M$ and $X_{1}=LM\,\ $then
\[
\beta_{1}=R^{2}MLML^{2}ML^{2}MRMR^{2}M\in\mathbf{M}(14)
\]
and
\[
\beta_{2}=R^{2}ML^{2}MRMR^{2}MR^{2}MLM\in\mathbf{M}(14)
\]
satisfy condition (b) of Theorem \ref{explicit characterization}. Notice that
$\beta_{1}$ and $\beta_{2}$ are greatest words in $\langle\langle\beta
_{1}\rangle\rangle$ and $\langle\langle\beta_{2}\rangle\rangle$, respectively,
and $\beta_{2}\in L_{n}(\overline{C}).$

\ 

\begin{flushleft}
Acknowledgement This research was supported by Beit Berl College. Thanks are
due to the referee for his precious time and comments.
\end{flushleft}

\end{document}